\theoremstyle{plain}
\newtheorem{theorem}{Theorem}[section]
\newtheorem{conjecture}[theorem]{Conjecture}
\newtheorem{lemma}[theorem]{Lemma}
\newtheorem{proposition}[theorem]{Proposition}
\theoremstyle{definition}
\newtheorem{definition}[theorem]{Definition}
\newtheorem{remark}[theorem]{Remark}
\numberwithin{equation}{section}
\newcommand\fantome[1]{}
\def\Fq{\mathbb F_q}
\def\bC{\mathbb C}
\DeclareMathOperator{\sgn}{sgn}
\newcommand{\ZZ}{\mathbb{Z}}
\newcommand{\FF}{\mathbb{F}}
\newcommand{\CC}{\mathbb{C}}
\newcommand{\EE}{\mathbb{E}}
\newcommand{\NN}{\mathbb{N}}
\newcommand{\PP}{\mathbb{P}}
\newcommand{\TT}{\mathbb{T}}
\newcommand\bb[1]{\mathbb{#1}}
\author{Kwun Chung}
\address{
Department of Mathematics\\
University of California, San Diego (UCSD)\\
9500 Gilman Drive 0112\\
La Jolla, CA  92093-0112\\
United States of America (USA)
}
\email{k7chung@ucsd.edu}
\author{Tuan Ngo Dac}
\address{
Univ Lyon, CNRS, Universit\'e Claude Bernard Lyon 1, 
Institut Camille Jordan, UMR 5208,
F-69622 Villeurbanne, France
}
\email{ngodac@math.univ-lyon1.fr}
\author{Federico Pellarin}
\address{
Institut Camille Jordan, UMR 5208,
Site de Saint-Etienne, 23 rue du Dr. P. Michelon,
F-42023 Saint-Etienne, France
}
\email{federico.pellarin@univ-st-etienne.fr}
\title[Universal families of multiple zeta values]{Universal families of Eulerian multiple zeta values in positive characteristic}
\date{\today}
\begin{document}

\begin{abstract}
We study positive characteristic multiple zeta values associated to general curves over $\FF_q$ together with an $\FF_q$-rational point $\infty$ as introduced by Thakur. For the case of the projective line these values were defined as analogues of classical multiple zeta values. In the present paper we first establish a general non-commutative factorization of exponential series associated to certain lattices of rank one. Next we introduce universal families of multiple zeta values of Thakur and show that they are Eulerian in full generality. In particular, we prove a conjecture of Lara Rodr\'iguez and Thakur in \cite{LRT20}.
One of the main ingredients of the proofs is the notion of $L$-series in Tate algebras introduced by the third author \cite{Pel12} in 2012. 
\end{abstract}

\maketitle


\section*{Introduction}

\subsection{Classical multiple zeta values} ${}$\par

Multiple zeta values are positive real numbers that have been involved in certain investigations by Euler in the late eighteenth century. They generalize the values of Riemann's zeta function at integers $\geq 2$ and occur naturally in the computation of their products. Surprisingly, these particular real numbers are ubiquitous in several mathematical and physical theories. For instance, they are connected with periods of mixed Tate motives, and with values of Feynman integrals in perturbative quantum field theory. It is also known that they are related to a certain universal vector bundle with connection on the projective line with the points $0,1,\infty$ removed. These, and other properties, made multiple zeta values the center of intensive studies, especially in the last three decades. We refer the reader to the survey of Zagier \cite{Zag94} and the book of Burgos Gil and Fres\'an \cite{BGF} for a detailed introduction to these topics, as well as for further references. 

Let $\mathbb N$ be the set of positive integers. By definition, the classical multiple zeta values are the values of the convergent series
	\[ \zeta(n_1,\dots,n_r)=\sum_{k_1>\dots>k_r>0} \frac{1}{k_1^{n_1} \dots k_r^{n_r}}, \quad \text{where } n_i\in\NN, \quad n_1 \geq 2. \]
Here $r$ is called the {\em depth} and $w=n_1+\dots+n_r$ the {\em weight} of the presentation $\zeta(n_1,\dots,n_r)$. For notational convenience, we set
$\zeta(\emptyset)=1$. For $r=1$ and $n \geq 2$ we recover the special values $\zeta(n)$ of the Riemann zeta function. 

In 1735, Euler proved that
\begin{equation} \label{Euler}
\frac{\zeta(2n)}{(2\pi)^{2n}}=(-1)^{n+1}\frac12\frac{B_{2n}}{(2n)!} \quad \text{ for all }  n\geq 1, 
\end{equation}

where for $k\geq 0$, $B_k$ denotes the $k$-th Bernoulli number. In view of this, it is natural to classify tuples $(n_1,\dots,n_r)$ with $n_i \geq 1, n_1 \geq 2$ such that the quotient $\zeta(n_1,\dots,n_r)/\pi^{n_1+\dots+n_r}$ is rational. In this case, following 
 \cite{LRT14,CPY19}, we say that the multiple zeta value $\zeta(n_1,\dots,n_r)$ is {\it Eulerian}. 
 
 For example $\zeta(\underbrace{2,\dots,2}_n)$ with $n \geq 1$ is Eulerian. This follows from the identity
\begin{equation} \label{Zagier}
\sum_{n \geq 0} (-1)^n \zeta(\underbrace{2,\dots,2}_n) x^{2n+1}=\frac{\sin(\pi x)}{\pi}=	\sum_{n \geq 0} (-1)^n \frac{\pi^{2n}}{(2n+1)!} x^{2n+1},
\end{equation}
an elegant proof of which is given by Zagier in \cite{Zag12}. Further, one can show that $\zeta(\underbrace{3,1,	\dots,3,1}_n)$ is Eulerian (conjectured in \cite{Zag94} and proved in \cite{BBBL01}).  We refer the reader to \cite[Remark after Conjecture 4.3]{LRT14} and \cite[\S 7.5]{Tha17} for more details and more complete references on Eulerian multiple zeta values. Brown formulated a sufficient (and conjecturally necessary) condition for Eulerian multiple zeta values in terms of motivic multiple zeta (see \cite[Theorem 3.3]{Bro12}). This condition is not completely effective (see \cite[\S 1]{CPY19} for more details).

Finally, we mention that Siegel and Klingen were able to extend Euler's formulas \eqref{Euler} for Dedekind zeta values $\zeta_F(.)$ attached to a number field $F$ and showed that for a totally real number field $F$ and all even $n \geq 2$, $\zeta_F(n)$ is an algebraic multiple of $\pi^{n [F:\mathbb Q]}$ (see \cite{Sie80} for further details).

\subsection{Multiple zeta values in positive characteristic } \label{char p MZV} ${}$\par

There is a deep analogy between the arithmetic of number fields and that of global function fields of positive characteristic. Through this analogy one can track similarities in the comparison of the arithmetic over the ring $\ZZ$ on one side, and the arithmetic over the ring $A$ of rational functions over a given smooth curve $X$ over a finite field, which are regular away from a distinguished point (see below for a rigorous description of the settings). Carlitz, in the years 1930s, inaugurated this study in the case of $X$ the projective line (the genus zero case). As a natural consequence, a theory of positive characteristic multiple zeta values associated to $A$ emerged, pioneered by Thakur. At this level of generality ($X$ a curve over a finite field), the description of the algebraic relations connecting multiple zeta values offers new challenges that do not seem to be immediately visible in the classical setting. It is our aim to shed new light and perspectives on these aspects.

Throughout this paper, let $\Fq$ be the finite field with $q$ elements and characteristic $p>0$. Let $K$ be the function field of a geometrically connected smooth projective curve $X$ over $\Fq$ and let $\infty$ be a place of $K$ of degree $d_\infty=1$.  Let $g$ be the genus of $X$ and let 
\begin{equation}\label{A}
A=H^0(X\setminus\{\infty\},\mathcal{O}_X)
\end{equation} 
be the ring of elements of $K$ which are regular outside $\infty$ ($A$ is called the {\em base ring}).  The $\infty$-adic completion $K_\infty$ of $K$ is equipped with the normalized additive $\infty$-adic valuation $v_\infty:K_\infty \rightarrow \mathbb Z\cup\{+\infty\}$.  Let us set $\deg(\cdot):=-v_\infty(\cdot)$.  The completion $\mathbb C_\infty$ of a fixed algebraic closure $\overline K_\infty$ of $K_\infty$ comes equipped with a unique valuation extending $v_\infty$, which will be again denoted by $v_\infty$.  We fix a uniformizer $\pi$ of $K_\infty$ so that we can identify $K_\infty$ and the field of formal Laurent series 
$\FF_q((\pi))$. This choice also allows to introduce a sign function $\sgn:K_\infty^\times \to \Fq^\times$.
This is the unique group homomorphism defined by $\sgn(\pi)=1$. We define $A_+$ to be the set of monic elements of $A$, i.e., the set of $a \in A$ such that $\sgn(a)=1$. For all $i \in \mathbb N$, we also set $A_{+,i}=\{a \in A_+: \deg(a)=i\}$. 


\subsection*{The genus 0 case}\label{in-genus-0}

We consider the genus $0$ case where $X=\mathbb P^1$ and $\infty$ is an $\FF_q$-rational point on it. Then $A=\Fq[\theta]$  the polynomial ring in $\theta$, a rational function over $\PP^1$ which has a simple pole at $\infty$ and which is regular everywhere else on $\PP^1$. We denote by $K=\Fq(\theta)$ the fraction field of $A$ and by $K_\infty=\Fq((1/\theta))$ the completion of $K$ at the place associated with $\infty$. 

Recall that $\mathbb N$ denotes the set of positive integers. In \cite{Car35} Carlitz introduced, for $n \in \mathbb N$, the elements $\zeta_A(n)$ given by the convergent series 
	\[ \zeta_A(n) := \sum_{a \in A_+} \frac{1}{a^n} \in K_\infty \]
which, through the previously mentioned analogy, correspond to classical special zeta values in the function field setting and are called the Carlitz zeta values. For any tuple of positive integers $\frak s=(s_1,\ldots,s_r) \in \mathbb N^r$, Thakur \cite{Tha04} considered the elements $\zeta_A(\frak s)$ (or $\zeta_A(s_1,\ldots,s_r)$) of $K_\infty$ defined by the convergent series 
\begin{equation} \label{definition MZV}
\zeta_A(\frak s):=\sum \frac{1}{a_1^{s_1} \ldots a_r^{s_r}} \in K_\infty
\end{equation}
where the sum runs through the set of tuples $(a_1,\ldots,a_r) \in A_+^r$ with $\deg a_1>\ldots>\deg a_r$. 
These are {\em Thakur's multiple zeta values}, characteristic $p$ analogues of the classical multiple zeta values. 
We call $r$ the {\em depth} of $\zeta_A(\frak s)$ and $w(\frak s)=s_1+\dots+s_r$ the {\em weight} of $\zeta_A(\frak s)$. We extend the definition of $\zeta_A(\frak s)$ to the empty tuple by defining the associated zeta value $\zeta_A(\emptyset)$ to be 1. We note that Carlitz zeta values $\zeta_A(n)$ are exactly depth one multiple zeta values. It can be proved that for any $\mathfrak{s}$ as above, $\zeta_A(\frak s)\in K_\infty^\times$ (see \cite{Tha09b}). A much stronger property holds: Thakur's multiple zeta values $\zeta_A(\mathfrak{s})$ and $1$ span an $\FF_p$-vector space which is also an $\FF_p$-algebra, graded by the weights (see \cite{Cha14,Tha17}).

In 1935 Carlitz \cite{Car35} introduced  analogues $BC_n\in K^\times$ ($n\geq1$) of the Bernoulli numbers of even order 
and proved an analogue of Euler's result \eqref{Euler} (see also \cite[\S 9.2]{Gos96}):
	\[ \frac{\zeta_A(n)}{\widetilde{\pi}^n}=\frac{BC_n}{\Gamma_n} \quad \text{ for all } n\geq 1, \ n\equiv 0\pmod{q-1}. \]
Here $\Gamma_n \in A$ is the {\em $n$-th Carlitz factorial}, and $\widetilde{\pi} \in \mathbb C_\infty^\times$ denotes the Carlitz period which is defined up to multiplication by an element of $\FF_q^\times$  (see \cite{Gos96,Tha04}).

Thakur called $\zeta_A(\frak s)$ {\it Eulerian} (see \cite[Definition 5.10.8]{Tha04}) if $\zeta_A(\frak s)/\widetilde{\pi}^{w(\frak s)}$ belongs to $K$.  Since $\zeta(p s_1,\dots,p s_r)=\zeta(s_1,\dots,s_r)^p$, we restrict ourselves to {\em primitive} tuples $\mathfrak{s}$, characterized by the property that not all the coefficients $s_i$ are divisible by $p$.  

Chen \cite{Che17}, Lara Rodr\'iguez and Thakur \cite{LRT14} proved the following remarkable theorem:
\begin{theorem}[Chen, Lara Rodr\'iguez and Thakur] \label{Eulerian genus 0}
Assume that $A=\Fq[\theta]$. Then for all $n\geq1$ and $k\geq 0$,  the multiple zeta value 
	\[ \zeta_A\left(q^n-1,(q-1)q^n,\ldots,(q-1)q^{n+k-1}\right) \] 
is Eulerian. 
\end{theorem}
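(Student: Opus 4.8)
The plan is to derive Theorem~\ref{Eulerian genus 0} as a special case of the general fact that universal families of Thakur's multiple zeta values are Eulerian. Write $\mathfrak{s}=(q^n-1,(q-1)q^n,\dots,(q-1)q^{n+k-1})$; a short computation gives the weight $w(\mathfrak{s})=q^{n+k}-1$, which is divisible by $q-1$, consistent with the depth-one case, where the Carlitz--Euler formula recalled above already makes $\zeta_A(q^{n+k}-1)$ Eulerian. The first step is to realize $\zeta_A(\mathfrak{s})$ as the value, at a suitable tuple of powers of $\theta$, of a universal family $\mathfrak{Z}(t_1,\dots,t_r)$ living in a Tate algebra in variables $t_1,\dots,t_r$. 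Here one uses the Frobenius identity $a(\theta^{q^m})=a^{q^m}$, valid for every $a\in A=\FF_q[\theta]$ because $a$ has coefficients in $\FF_q$: it rewrites $a^{q^n-1}=a(\theta^{q^n})/a$ and $a^{(q-1)q^{n+j}}=a(\theta^{q^{n+j+1}})/a(\theta^{q^{n+j}})$, so every exponent occurring in~\eqref{definition MZV} for $\zeta_A(\mathfrak{s})$ is recorded by an evaluation at a power of $\theta$; replacing those powers by the independent $t_i$ and their Frobenius twists produces $\mathfrak{Z}$, with $\zeta_A(\mathfrak{s})=\mathfrak{Z}(\theta^{q^{m_1}},\dots,\theta^{q^{m_r}})$ for the $m_i$ dictated by $\mathfrak{s}$.

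The second step is the heart of the matter: apply the general non-commutative factorization of the exponential series attached to the rank-one Carlitz lattice (deformed over the ambient Tate algebra) to the generating series of $\mathfrak{Z}$. This yields a closed-form factorization writing $\mathfrak{Z}(t_1,\dots,t_r)$ as $\widetilde\pi^{\,w(\mathfrak{s})}$ times an explicit element of $K(t_1,\dots,t_r)$; that is, the universal family is Eulerian. The engine behind the factorization is Pellarin's identity $L(\chi_t,1)\,\omega(t)=-\widetilde\pi/(t-\theta)$ relating the $L$-series in the Tate algebra to the Anderson--Thakur function $\omega$, together with the product formula for $\widetilde\pi$ and with Frobenius ($\tau$-)twisting: applying $\tau^m$ sends $L(\chi_t,1)$ to $\sum_{a\in A_+}a(t)/a^{q^m}$ and multiplies $\omega(t)$ by a factor in $K(t)^\times$ (this uses $\lambda_\theta^{\,q-1}=-\theta$ for a fixed $(q-1)$-th root $\lambda_\theta$ of $-\theta$), which is precisely what tames the higher $q$-power exponents; the divisibility $q-1\mid w(\mathfrak{s})$ makes the residual powers of $\lambda_\theta$ collapse into $K$. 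The non-commutativity --- the twisted factors do not commute, so their order carries information --- is what encodes the nesting $\deg a_1>\dots>\deg a_r$ of~\eqref{definition MZV}.

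The third step is to specialize $t_i\mapsto\theta^{q^{m_i}}$ and to check this is legitimate: that the rational expression for $\mathfrak{Z}/\widetilde\pi^{\,w(\mathfrak{s})}$ has no pole there and that $\mathfrak{Z}$ specializes to $\zeta_A(\mathfrak{s})$. The only poles of the $\omega$-type factors sit at $t=\theta^{q^j}$, and one shows that at our specialization each such pole is either absent or cancelled by a zero of an accompanying factor $t_i-\theta^{q^j}$, after which the surviving infinite products telescope to \emph{finite} products, hence to elements of $K$. This is the depth-one mechanism in a nutshell: $\omega(t)(1-t/\theta)$ is regular at $t=\theta$, and combining $\sum_{a\in A_+}a(t)/a^{q^n}=\tau^n\big(L(\chi_t,1)\big)$, Pellarin's identity, the product formula for $\widetilde\pi$, and $\lambda_\theta^{\,q-1}=-\theta$ yields $\zeta_A(q^n-1)/\widetilde\pi^{\,q^n-1}\in K^\times$, i.e.\ Carlitz's Euler formula. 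Running the same bookkeeping for the depth-$(k+1)$ family $\mathfrak{Z}$ and the specialization associated to $\mathfrak{s}$ gives $\zeta_A(q^n-1,(q-1)q^n,\dots,(q-1)q^{n+k-1})\big/\widetilde\pi^{\,q^{n+k}-1}\in K$, which is the assertion.

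I expect the main obstacle to be twofold. First, pinning down the correct universal family and the precise ordered system of $\tau$-twisted $\omega$-factors in its closed form: this is where non-commutativity really bites, and where one must check the closed form is rational over $K$ rather than merely over $\overline K$. Second, the pole/zero bookkeeping at the points $\theta^{q^j}$ underlying the validity of the specialization and the telescoping of the associated infinite products. The depth-one case above is a faithful prototype of both difficulties; the general case amounts to organizing a nested hierarchy of Frobenius twists and the corresponding products.
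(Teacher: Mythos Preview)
Your outline has the right flavor—Tate algebras, Frobenius twisting, Pellarin's $L$-series—but the decisive step is only asserted, not argued. You claim the multi-variable family $\mathfrak Z(t_1,\dots,t_r)$ admits a closed form as $\widetilde\pi^{\,w(\mathfrak s)}$ times an element of $K(t_1,\dots,t_r)$, and say this comes from ``the non-commutative factorization of the exponential series'' together with Pellarin's identity. Neither does the job on its own: Pellarin's identity is a depth-one, single-variable statement about $Z_0=\sum_a\chi(a)/a$; and the non-commutative factorization of $\exp_A$ (Theorem~\ref{Zagier formula}) identifies the coefficients $\epsilon_k$ with the special family $\zeta_A(q-1,\dots,(q-1)q^{k-1})$, which is the $n=0$ limit case, not the $n\ge1$ family of Theorem~\ref{Eulerian genus 0}. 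You never specify the ``generating series of $\mathfrak Z$'' nor explain how feeding it into the factorization of $\exp_A$ produces rationality. Worse, the claim is literally false before specialization: already in depth one $Z_0=\widetilde\pi/((\theta-t)\omega(t))$, and $\omega(t)$ is transcendental over $K(t)$, so $Z_0/\widetilde\pi\notin K(t)$. Whatever rationality you are after only materializes \emph{after} evaluation, and that is precisely the step that needs a mechanism.

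The paper (whose Theorem~\ref{main theorem} specializes to the genus-$0$ statement cited here) supplies that mechanism, and it is different from what you sketch. It stays in a \emph{single}-variable Tate algebra, with $Z_k=\sum_d\mathcal S_d\,S_{<d}(q-1,\dots,(q-1)q^{k-1})$, so that $\tau^n(Z_k)|_\Xi$ is exactly the MZV in question. The new input is the finite-level identity $\mathcal S_i^{(1)}=\mathcal S_{\le i}\cdot S_i(q-1)$ (Theorem~\ref{key identity}), proved by a Riemann--Roch/evaluation argument on the curve. This immediately gives the harmonic product $Z_0\,\zeta_A(q-1,\dots,(q-1)q^{k-1})=Z_k+Z_{k-1}^{(1)}$, equivalently $\mathcal F-\tau\mathcal F=Z_0\exp_A$ with $\mathcal F=\sum_k(-1)^kZ_k\tau^k$; inverting $1-\tau$ expresses $Z_k$ as an $H$-linear combination of $\tau^i(Z_0)\,\widetilde\pi_\phi^{\,q^k-q^i}$, and only \emph{then} does evaluation at $\Xi$ (via $\tau^m(Z_0)|_\Xi=\zeta_A(q^m-1)$) produce the explicit relation~\eqref{identity-mzv} with $\alpha_{n,k}\in H$. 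The identity of Theorem~\ref{key identity} is the bridge from depth one to higher depth; your proposal has no substitute for it, and without it the ``closed-form factorization'' in your Step~2 remains a hope rather than a proof.
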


Contrary to the classical setting, for $q>2$, it is conjectured that these series exhaust {\it all} the (primitive) Eulerian tuples when the depth exceeds $2$ (see \cite[\S 6.2]{CPY19} and \cite[\S 7.3]{Tha17}). The depth is formally defined in \S \ref{Thakur MZV}.

\subsection*{The higher genus case}

The analogy mentioned in the beginning of \S \ref{char p MZV} focuses on the comparison between the arithmetic theory over $\ZZ$ and that over an $\FF_q$-algebra $A$ as in (\ref{A}). For this reason, Carlitz's theory only gives an incomplete view.
We now move to include the general situation of the higher genus cases and present our main results. 

The definition \eqref{definition MZV} of $\zeta_A(\frak s)$ extends without modification to the case of general $A$ with $\infty\in X(\FF_q)$ such that $d_\infty=\deg(\infty)=1$ as in (\ref{A}) (see also \S \ref{Thakur MZV} for precise definitions). Following Lara Rodr\'iguez and Thakur in \cite{LRT20}, we say that $\zeta_A(\frak s)$ is {\it Eulerian} if $\zeta_A(\frak s)/\widetilde{\pi}_A^{w(\frak s)}$ is algebraic over $K$, where $\widetilde \pi_A:=\widetilde \pi_\phi \in \bC_\infty$ is the period attached to the {\em standard sign-normalized Drinfeld module of rank one} associated to $A$ (see \S \ref{DH-modules}). 

It turns out that for all $n \geq 1$ and $n \equiv 0 \pmod{q-1}$,  $\zeta_A(n)$ is Eulerian (see \cite[\S 8.18]{Gos96}  and note the proximity with the Klingen-Siegel theorem mentioned above). However, only five other cases of Eulerian multiple zeta values were known by the recent work of Lara Rodr\'iguez and Thakur \cite{LRT20}: they are all of the form $\zeta_A\left(q-1,q(q-1)\right)$ or $\zeta_A\left(q^2-1,(q-1)q^2\right)$ for one of the four base rings $A$ of class number one.  The proofs are based on explicit log-algebraicity formulas developed in \cite{Tha92,Tha09} (see for examples \cite{And94,And96,ANDTR17,ANDTR20, Pap} for further developments about log-algebraicity identities). Based on extensive use of computer aided numerical experiments, Lara Rodr\'iguez and Thakur conjectured that the appropriate generalization of the statement of Theorem \ref{Eulerian genus 0} remains valid for these four base rings (see \cite[Conjecture 3.1]{LRT20}).

\subsection{Main results} ${}$\par

In this paper we work with the general case where $A$ is an arbitrary base ring as given in \S \ref{char p MZV}. We first establish a non-commutative analogue of Zagier's formula \eqref{Zagier} in our settings, with $A$ as in (\ref{A}) and $d_\infty=1$.

\begin{theorem}[Theorem \ref{Zagier formula}] \label{Theorem 1}
Let $A$ be an arbitrary base ring and let $\phi$ be the corresponding standard sign-normalized Drinfeld module of rank one. Then
\begin{equation*}
\sum_{k\geq 0}(-1)^k\zeta_A\left(q-1,\ldots,(q-1)q^{k-1}\right)\tau^k=\widetilde{\pi}_\phi^{-1}\exp_\phi \widetilde \pi_\phi\in K_\infty[[\tau]].
\end{equation*}
In particular, for all $k \in \mathbb N$,  $\zeta_A(q-1,q(q-1),\dots,q^{k-1}(q-1))$ is Eulerian.
\end{theorem}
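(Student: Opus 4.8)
The plan is to establish the displayed identity by recognizing both sides as the natural non-commutative exponential series attached to the rank-one lattice $A\widetilde{\pi}_\phi$, and then to extract the Eulerian statement purely formally from it. Concretely, I would start from the Anderson–Thakur / Carlitz-type description of $\exp_\phi$ as an entire $\Fq$-linear power series $\exp_\phi(z)=\sum_{i\geq 0} e_i z^{q^i}$ with $e_0=1$, and use the product expansion $\exp_\phi(z)=z\prod_{0\neq\lambda\in A\widetilde{\pi}_\phi}(1-z/\lambda)$ coming from the fact that $A\widetilde{\pi}_\phi$ is the kernel (the period lattice) of $\exp_\phi$. Dividing by $\widetilde{\pi}_\phi$ and substituting $z=\widetilde{\pi}_\phi w$ turns the right-hand side into $\widetilde{\pi}_\phi^{-1}\exp_\phi(\widetilde{\pi}_\phi w)=\sum_{i\geq 0} e_i\widetilde{\pi}_\phi^{q^i-1} w^{q^i}$. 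The key computation is then to identify $e_i\widetilde{\pi}_\phi^{q^i-1}$, up to the sign $(-1)^i$, with the iterated power-sum $\zeta_A(q-1,(q-1)q,\dots,(q-1)q^{i-1})=\sum_{\deg a_1>\cdots>\deg a_i}\big(a_1^{q-1}a_2^{q(q-1)}\cdots a_i^{q^{i-1}(q-1)}\big)^{-1}$; here $\tau$ plays the role of the formal variable $w$ on which it acts as the $q$-power Frobenius, so that $\zeta_A(\cdots)\tau^i$ encodes exactly the coefficient of $w^{q^i}$.

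To make that identification I would use the already-established non-commutative factorization of exponential series attached to rank-one lattices (the "general non-commutative factorization" advertised in the abstract and presumably proved earlier in the paper). The point is that for the lattice $A\widetilde{\pi}_\phi$ the exponential factors over the "shells" $A_{+,i}$ by degree: one writes $\widetilde{\pi}_\phi^{-1}\exp_\phi(\widetilde{\pi}_\phi\,\cdot\,)$ as an infinite ordered product $\prod_{i\geq 0}\big(1 - (\text{shell-}i\text{ sum})\,\tau\big)$-type expression, where the $i$-th factor involves the power sum $S_i(q-1):=\sum_{a\in A_{+,i}} a^{-(q-1)}$ over monic elements of degree exactly $i$. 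Expanding this ordered product and collecting the coefficient of $\tau^k$ produces precisely $\sum_{i_1>\cdots>i_k\geq 0} S_{i_1}(q-1)S_{i_2}(q-1)\cdots$ with the appropriate Frobenius twists turning the exponents into $q-1, q(q-1),\dots, q^{k-1}(q-1)$; this is exactly the series defining $\zeta_A(q-1,q(q-1),\dots,q^{k-1}(q-1))$, and the sign $(-1)^k$ comes from the $k$ factors of $-\tau$. This step is where the noncommutativity genuinely matters: the Frobenius twist is why the successive exponents get multiplied by powers of $q$, matching Thakur's definition \eqref{definition MZV}, and it is the main technical obstacle — one must track the twists through the ordered product carefully and verify convergence of the infinite product in $K_\infty[[\tau]]$ (equivalently, coefficient-wise in $K_\infty$), using $\deg S_i(q-1)\to -\infty$.

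Granting the identity, the Eulerian conclusion is immediate and formal: $\exp_\phi$ has coefficients in $K$ (indeed in $A$ after normalization, since $\phi$ is the standard sign-normalized Drinfeld module defined over $A$), so each coefficient $e_i$ of $\exp_\phi$ lies in $K$. Reading off the coefficient of $\tau^k$ on both sides of
\[
\sum_{k\geq 0}(-1)^k\zeta_A\left(q-1,\dots,(q-1)q^{k-1}\right)\tau^k=\widetilde{\pi}_\phi^{-1}\exp_\phi\widetilde{\pi}_\phi
\]
gives $(-1)^k\zeta_A(q-1,q(q-1),\dots,q^{k-1}(q-1)) = e_k\,\widetilde{\pi}_\phi^{\,q^k-1}$, whence
\[
\frac{\zeta_A\left(q-1,q(q-1),\dots,q^{k-1}(q-1)\right)}{\widetilde{\pi}_\phi^{\,w}} = (-1)^k e_k\,\widetilde{\pi}_\phi^{\,q^k-1-w},
\]
where $w=(q-1)(1+q+\cdots+q^{k-1})=q^k-1$. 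Thus $q^k-1-w=0$ and the right-hand side equals $(-1)^k e_k\in K$, so the multiple zeta value is Eulerian (in fact its ratio with $\widetilde{\pi}_\phi^{\,w}$ lies in $K$, which is stronger than "algebraic over $K$"). The only subtlety worth a remark is the weight bookkeeping $w(\mathfrak{s})=q^k-1$, which is exactly what makes the powers of $\widetilde{\pi}_\phi$ cancel; this is the structural reason these particular tuples, and the $L$-series-in-Tate-algebras machinery of \cite{Pel12} used to prove the factorization, produce Eulerian values.
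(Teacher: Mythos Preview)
Your approach is essentially the paper's own: it factors the finite Carlitz--Hayes polynomials as $\widetilde{\mathcal{E}}_n=(1-S_{n-1}(q-1)\tau)\cdots(1-S_0(q-1)\tau)$, expands to get $\kappa_{n,m}=(-1)^m S_{<n}(q-1,\dots,q^{m-1}(q-1))$ via exactly the Frobenius-twist bookkeeping you describe, and then passes to the limit $\exp_A=\widetilde{\pi}_\phi^{-1}\exp_\phi\widetilde{\pi}_\phi$. Two small corrections: the coefficients $\epsilon_k$ of $\exp_\phi$ lie in the Hilbert class field $H$ rather than in $K$ (the standard sign-normalized module is defined over $O_H$, not over $A$, when $g>0$), so you obtain $\zeta_A(\cdots)/\widetilde{\pi}_\phi^{q^k-1}=(-1)^k\epsilon_k\in H$, which suffices for ``algebraic over $K$'' but is weaker than you claim; and the factorization here is the heart of \emph{this} proof and requires no Tate-algebra input---that machinery enters only for Theorem~\ref{main theorem}.
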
 
Here $\widetilde \pi_\phi \in \bC_\infty$ is any fundamental period attached to $\phi$ and $\exp_\phi\in K_\infty[[\tau]]$ is the exponential function associated to $\phi$ (see \S \ref{DH-modules}). As $\widetilde \pi_\phi$ is defined up to multiplication by an element of $\FF_q^\times$, the formula does not depend on its choice.

We then extend Theorem \ref{Eulerian genus 0} with explicit formulas for a base ring $A$ as in the hypotheses of Theorem \ref{Theorem 1}, and prove the aforementioned conjecture of Lara Rodr\'iguez and Thakur in full generality. 

\begin{theorem}[Theorem \ref{main theorem}] \label{Theorem 2}
Let $A$ be an arbitrary base ring and let $n \geq 1$ be an integer. We denote by $H\subset K_\infty$ the Hilbert class field of $A$. Then for all $k\geq 0$, there exists an explicitly computable constant $\alpha_{n,k}\in H$ (see \eqref{formula-for-alpha}) such that
\begin{equation*}
\zeta_A\left(q^n-1,(q-1)q^n,\ldots,(q-1)q^{n+k-1}\right)=\alpha_{n,k} \zeta_A\left(q^{n+k}-1\right).
\end{equation*}
In particular, $\zeta_A\left(q^n-1,(q-1)q^n,\ldots,(q-1)q^{n+k-1}\right)$ is Eulerian.
\end{theorem}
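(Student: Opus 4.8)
The plan is to deduce Theorem~\ref{Theorem 2} from the non-commutative factorization of $\exp_\phi$ provided by Theorem~\ref{Theorem 1} together with the machinery of $L$-series in Tate algebras. First I would unwind the statement of Theorem~\ref{Theorem 1}: the identity
\[
\sum_{k\geq 0}(-1)^k\zeta_A\!\left(q-1,\ldots,(q-1)q^{k-1}\right)\tau^k=\widetilde{\pi}_\phi^{-1}\exp_\phi \widetilde{\pi}_\phi\in K_\infty[[\tau]]
\]
already handles the case $n=1$. To reach general $n$, the natural move is to introduce a deformation parameter: replace the ``constant field'' variable $\theta$-datum by a Tate-algebra variable $t$ (or several variables $t_1,\dots,t_s$), and consider the $L$-series $L(\ldots;t)$ of Pellarin \cite{Pel12} attached to $\phi$. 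The key point is that specializing such an $L$-series at Frobenius twists of the variable, i.e.\ at $t\mapsto \theta^{q^{j}}$-type evaluations, produces exactly the higher-level multiple zeta values $\zeta_A\!\left(q^n-1,(q-1)q^n,\ldots\right)$, while the functional/factorization properties of $L$ in the Tate algebra (a ``universal'' incarnation of the $n=1$ case) remain available before specialization. This is the sense in which the families are \emph{universal}.

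Concretely, the key steps, in order, would be: (1) Set up the Tate algebra $\TT$ over $\bC_\infty$ and the associated exponential $\exp_\phi$ and $L$-function with values in $\TT$, recording the Anderson–Thakur / Pellarin-type log-algebraicity identity that expresses $\sum_a \exp_\phi(\widetilde\pi_\phi \, a / \text{something})\,t^{\deg a}$ (appropriately normalized) as an algebraic/rational function. (2) Extract from the non-commutative factorization in Theorem~\ref{Theorem 1} the ``twisted'' version: applying the $k$-th Frobenius-twist structure to the exponential series and comparing $\tau$-coefficients yields an identity relating $\zeta_A\!\left(q^n-1,(q-1)q^n,\ldots,(q-1)q^{n+k-1}\right)$ to the depth-one value $\zeta_A(q^{n+k}-1)$ with coefficient built from the coefficients of $\exp_\phi$ and from the first few values $\zeta_A(q-1,\dots)$. (3) Identify that coefficient explicitly as an element $\alpha_{n,k}$ of the Hilbert class field $H$, using that the coefficients of $\exp_\phi$ lie in $H$ (since $\phi$ is the standard sign-normalized Drinfeld module, defined over $H$) and that $\widetilde\pi_\phi^{q^{n+k}-1}$ cancels appropriately in forming the ratio. (4) Conclude: since $\zeta_A(q^{n+k}-1)/\widetilde\pi_\phi^{q^{n+k}-1}$ is Eulerian by the Carlitz/Goss analogue of Euler's formula (as noted in the excerpt, $q^{n+k}-1\equiv 0\pmod{q-1}$), and $\alpha_{n,k}\in H$ is algebraic over $K$, the product
\[
\zeta_A\!\left(q^n-1,(q-1)q^n,\ldots,(q-1)q^{n+k-1}\right)=\alpha_{n,k}\,\zeta_A\!\left(q^{n+k}-1\right)
\]
is an algebraic multiple of $\widetilde\pi_\phi^{\,w}$ with $w=q^{n+k}-1=(q^n-1)+(q-1)q^n+\cdots+(q-1)q^{n+k-1}$, hence Eulerian.

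The main obstacle I anticipate is step (2): passing from the single factorization identity at ``level $1$'' to the family at ``level $n$''. In the genus-zero case one can use the Carlitz module's explicit Frobenius structure and the Anderson–Thakur polynomials, but for a general base ring $A$ one must instead run the argument through the $L$-series in Tate algebras, prove the relevant log-algebraicity/algebraicity of the special value in $\TT$ in full generality, and then justify the specialization $t\mapsto\theta^{q^{n}}$ (or the multivariable analogue) — checking convergence, that the specialization commutes with the $\tau$-expansion, and that no spurious denominators appear. A secondary difficulty is making $\alpha_{n,k}$ genuinely \emph{explicit} (formula~\eqref{formula-for-alpha}): this requires tracking the coefficients of $\exp_\phi$ and of the shifted zeta values through the comparison, and showing the resulting expression simplifies to a recognizable element of $H$. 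Once the level-$n$ identity is in hand, the Eulerian conclusion is essentially formal, as sketched in step (4).
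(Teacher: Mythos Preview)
Your overall architecture is right: lift to the Tate algebra, use Theorem~\ref{Theorem 1} to identify $\exp_A$ with the generating series $\sum_k(-1)^k\zeta_A(q-1,\dots,(q-1)q^{k-1})\tau^k$, then apply $\tau^n$ and evaluate at the distinguished point $\Xi$ to descend to the level-$n$ multiple zeta values. Your step~(4) is exactly how the paper concludes. But step~(2), which you correctly flag as the main obstacle, has a genuine gap that is not filled by log-algebraicity or by a specialization argument.

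The missing ingredient is a \emph{finite-level} identity that the paper proves independently of Theorem~\ref{Theorem 1}: for every $i\geq 0$,
\[
\mathcal S_i^{(1)}\;=\;\mathcal S_{\leq i}\cdot S_i(q-1),
\]
where $\mathcal S_i=\sum_{a\in A_{+,i}}\chi(a)/a$ (Theorem~\ref{key identity}). Summing this over $i$ yields the harmonic product formula $Z_0\cdot\zeta_A(q-1,\dots,(q-1)q^{k-1})=Z_k+Z_{k-1}^{(1)}$ (Proposition~\ref{prop-from-Z0-to-Zk}), which is equivalent to the operator equation $\mathcal F-\tau\mathcal F=Z_0\exp_A$ for $\mathcal F=\sum_k(-1)^kZ_k\tau^k$. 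One then solves for $Z_k$ as a finite combination of the twists $\tau^i(Z_0)$, $0\le i\le k$, with coefficients built from the $\epsilon_j$; applying $\tau^n$ and evaluating at $\Xi$ (using $\tau^i(Z_0)\big|_\Xi=\zeta_A(q^i-1)=\lambda_i\widetilde\pi_\phi^{q^i-1}$) gives the explicit $\alpha_{n,k}$ in~\eqref{formula-for-alpha}. Without the key identity, Theorem~\ref{Theorem 1} only tells you what the $\tau$-coefficients of $\exp_A$ are; it gives no relation between the Tate-algebra functions $Z_k$ and $Z_0$, and it is precisely that relation which must be in place \emph{before} you twist and specialize. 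The proof of Theorem~\ref{key identity} is a separate argument via Riemann--Roch and an analysis of the vanishing/nonvanishing of $\mathcal S_{j_m}$ at the points $\Xi,\Xi^{(1)},\dots,\Xi^{(m)}$; neither the non-commutative factorization nor a generic log-algebraicity input supplies it. (The paper in fact stresses that its method is different from the earlier log-algebraicity approach, which had only reached a handful of depth-two cases.)

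One small correction: Theorem~\ref{Theorem 1} does not quite ``handle the case $n=1$'' of Theorem~\ref{Theorem 2}. It shows the values $\zeta_A(q-1,\dots,(q-1)q^{k-1})$ are Eulerian, but Theorem~\ref{Theorem 2} asserts the sharper proportionality to the depth-one value $\zeta_A(q^{n+k}-1)$ with coefficient in $H$; even at $n=1$ this already requires the relation between $Z_k$ and the twists of $Z_0$ coming from the key identity.
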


As pointed out by Lara Rodr\'iguez and Thakur in \cite{LRT20}, such identities also hold, in the genus zero case, at `finite levels' (that is, at the level of partial sums). However, different phenomena occur in the case of $X$ of genus $g>0$. In this case, identities of multiple zeta values (`infinite level' identities) are regulated in a different way and Theorems \ref{Theorem 1} and \ref{Theorem 2} do not seem to mirror identities at the finite level. It is in fact the analysis of the interesting phenomenology described in \cite{LRT20} that brought us to the path that gave rise to the present paper. The reader will also notice similarities in the processes behind the proofs of, on one side, \cite[Theorem 3.2]{LRT14} and, on the other side, the proof of \cite[Lemma 6.11]{GP21}. 

We now briefly describe the ingredients of the proofs. Our approach is completely different from that of the previous works. To prove Theorem \ref{Theorem 1} we provide a non-commutative factorization of the so-called Carlitz-Hayes polynomials. Then, passing to the limit, we obtain a non-commutative factorization of the function $\exp_A:=\widetilde \pi_\phi^{-1} \exp_\phi \widetilde \pi_\phi$ from which Theorem \ref{Theorem 1} follows.

The proof of Theorem \ref{Theorem 2} is based on the key notion of {\em $L$-series in Tate algebras} introduced by the third author in \cite{Pel12}. This notion has turned out to be very fruitful in function field arithmetic, see for example \cite{ANDTR17,ANDTR19,ANDTR20,AP15,APTR16,APTR18,ATR17,GP21,GP18}. In our setting, these functions can be identified with rigid analytic functions over the analytification $(X\setminus\{\infty\})^{an}_{\CC_\infty}$ of the affine curve $\operatorname{Spec}(A\otimes_{\FF_q}\CC_\infty)$ over $\CC_\infty$. Note that other classical functions of the theory, such as the exponential functions $\exp_\phi$ and $\exp_A$, are rather rigid analytic functions over the rigid affine line $\mathbb{A}^{1,an}_{\CC_\infty}$. 


We prove a key and unexpected new general identity in Theorem \ref{key identity} among $L$-series in Tate algebras at finite levels. As a consequence, we obtain a harmonic product formula in Proposition \ref{prop-from-Z0-to-Zk} which allows us to conclude. 

Finally, as an application, for class number one base rings, we show in Theorem \ref{LRT conjecture} that our explicit expressions match with those conjectured by Lara Rodr\'iguez and Thakur \cite[Conjecture 3.3]{LRT20}.



\subsection*{Acknowledgements} The authors are deeply thankful to Dinesh Thakur for a careful reading of a preliminary version of the present manuscript, and for useful corrections, suggestions and discussions.

\section{Background}

\subsection{Notation}

We keep the notation of the Introduction. Recall that $\mathbb N$ is the set of positive integers. We denote by $\mathbb Z^{\geq 0}$ the set of non-negative integers. Let $K$ be the function field of a geometrically connected smooth projective curve $X$ over $\Fq$ and $\infty$ be a place of $K$ of degree $d_\infty=1$. Let $g$ be the genus of $X$ and $A$ be the {\em base ring} of elements of $K$ which are regular outside $\infty$. Let $v_\infty$ be the additive valuation at $\infty$ and let us set $\deg(\cdot):=-v_\infty(\cdot)$. We denote by $K_\infty$ the completion of $K$ at $\infty$ and by $\bC_\infty$ the completion of an algebraic closure $\overline K_\infty$. Let $\overline K$ be the algebraic closure of $K$ in $\overline K_\infty$. 

We fix a uniformizer $\pi$ of $K_\infty$ so that we can identify $K_\infty$ and the field of formal Laurent series 
$\FF_q((\pi))$. This choice also allows to associate to it a sign function $\sgn:K_\infty^\times \to \Fq^\times$ which is the unique group homomorphism such that $\epsilon(\pi)=1$ (see \cite[\S 7.2]{Gos96}). We define $A_+$ to be the set of monic elements of $A$, i.e., the set of $a \in A$ such that $\sgn(a)=1$. For all $i \in \mathbb Z^{\geq 0}$, we also set $A_{+,i}=\{a \in A_+: \deg(a)=i\}$, and
\begin{align*}
A(\leq i)=\{a \in A: \deg(a) \leq i\}, \quad A(<i)=\{a \in A: \deg(a) < i\}.
\end{align*}

Further, for any field extension $L$ of $\Fq$, we have a uniquely determined sign function
	\[ \widetilde \sgn:(L \otimes_{\Fq} K_\infty)^\times \to L^\times \]
extending $\sgn$. We also set
	\[ X_L:=X \times_{\Fq} L. \]
	
\subsection{Multiple zeta values of Thakur, and in Tate algebras} \label{Thakur MZV}
We follow Thakur in \cite[\S 3]{Tha17} and Lara Rodr\'iguez-Thakur in \cite[\S 2]{LRT20}. To the datum
$(K,\infty,\sgn)$ we associate certain multiple zeta values
$$\zeta_A\big(s_1,\ldots,s_r\big)\in K_\infty$$ in the following way.
For $d\geq 0$ and $n\in\mathbb N$ we consider the {\em power sum of degree $d$ and order $n$} given by
$$S_d(n)=\sum_{a\in A_{+,d}}\frac{1}{a^n}\in K.$$ 
By convention we define empty sums to be 0 and empty products to be 1.  Thus if $A_{+,d}=\emptyset$, then $S_d(n)=0$. Note that some such sums can vanish. 
Along with this, we also set
$$S_{<d}(n):=\sum_{i=0}^{d-1}S_i(n)\in K$$
and iteratively for $r>1$ and $s_1,\ldots,s_r\in\mathbb N$,
\begin{align*}
S_d(s_1,\ldots,s_r)&:=S_d(s_1)S_{<d}(s_2,\ldots,s_r),\\ 
S_{<d}(s_1,\ldots,s_r)&:=\sum_{i=0}^{d-1}S_{i}(s_1,\ldots,s_r)\in K.
\end{align*}
It is easily seen that $S_d(n)$ tends to $0$ in $K_\infty$, as $d$ tends to $\infty$, the sequence 
$S_{<d}(s_1,\ldots,s_r)$ converges in $K_\infty$ for any choice of $s_1,\ldots,s_r\in\mathbb N$. We write
\begin{equation}\label{def-general-zeta}
\zeta_A\big(s_1,\ldots,s_r\big):=\lim_{d\rightarrow\infty}S_{<d}(s_1,\ldots,s_r)\in K_\infty.
\end{equation}
For notational convenience, we also set
$$\zeta_A(\emptyset):=1\in K_\infty.$$

For an $r$-tuple $\frak s=(s_1,\dots,s_r)\in \mathbb N^r$ we shall write $\zeta_A(\mathfrak{s})$ instead of using the long expression (\ref{def-general-zeta}). The {\em weight} of $\mathfrak{s}$ is $w(\mathfrak s)=s_1+\dots+s_r$ and its {\em depth} is
$r$. We shall also say that the presentation $\zeta_A(\frak{s})$ is of {\em depth $r$} and {\em weight}
$w(\frak{s})$. 

These elements of $K_\infty$ are the $\infty$-adic multiple zeta values of the papers \cite{LRT20,Tha17,Tha20}. At this level of generality, not all the basic properties of the multiple zeta values of
\S \ref{in-genus-0} are known to be true. For instance, it is unclear if all the values defined in (\ref{def-general-zeta}) are
non-zero. Moreover, while it is fairly well known that the $\FF_p$-subvector space they span in $K_\infty$ also is an $\FF_p$-algebra (with the product rules being identical to those in the genus zero case), it seems that we miss
a proof that the algebra they generate is graded by the weights.

\subsubsection{A family of rigid analytic functions} \label{MZV-Tate-alg} 
We recall that $\CC_\infty$ is the completion of an algebraic closure $\overline{K}_\infty$ of $K_\infty$.
We present here the basic 
{\em multiple zeta values in Tate algebras} that we study in the paper.
We consider the $\FF_q$-algebra homomorphism 
$$\chi:A\rightarrow A \otimes_{\FF_q} \CC_\infty$$
determined by $a\mapsto a \otimes 1$. There is a natural bijection $x\mapsto \chi_x$ between closed points $x\in 
X(\CC_\infty)\setminus\{\infty\}$ (where for $F$ a field extension of $\FF_q$, $X(F)$ denotes the set of $F$-rational points of $X$) and $\FF_q$-algebra homomorphisms $A\xrightarrow{\chi_x}\CC_\infty$.
We denote by $\Xi$ the unique point of $X(\CC_\infty)\setminus\{\infty\}$ such that $\chi_\Xi$ is the natural inclusion $A\subset\CC_\infty$.
For any given $a\in A$ the assignment $x\mapsto \chi_x(a)\in\CC_\infty$
defines a rational map
$$X(\CC_\infty)\setminus\{\infty\}\xrightarrow{\chi(a)}\CC_\infty$$
which is regular everywhere away from $\infty$, with a pole of order $\deg(a)$ at $\infty$.
If $x\in X(\CC_\infty)\setminus\{\infty\}$ we denote by $\chi_x(a)$, or $\chi(a)_x$, or $\chi(a)|_x$, the image of $x$ by $\chi(a)$ and we call it the {\em evaluation of $\chi(a)$ at $x$}.

If $F$ is a subfield of $\CC_\infty$, we consider the 
Tate algebra 
	$$\TT(F):=\widehat{A \otimes_{\Fq} F}_{\|\cdot\|}$$ 
with the completion $\widehat{(\cdot)}$ relative to $\|\cdot\|$ the multiplicative Gauss valuation extending a fixed multiplicative valuation $|\cdot|$ over $1 \otimes F$, trivial over $A \otimes 1$.
Let $\mathcal{B}=(\beta_i)_{i\in \mathcal{I}}$ be a basis of the $\FF_q$-vector space $A$. Note that $\mathcal I$ is countable set. Every element $f$ of $\TT(F)$ can be expanded in a unique way as a series
	$$f=\sum_{i\in \mathcal I}\beta_i f_i,\quad f_i\in \widehat{F},$$
with $\widehat{F}$ the completion of $F$, where $f_i\rightarrow0$ for the Fr\'echet filter over $\mathcal I$. Then $\|f\|=\sup_{i \in \mathcal I} |f_i|$. An {\em entire function} $f$
over $X(\CC_\infty)\setminus\{\infty\}$ is by definition one such formal series in $\TT(\CC_\infty)$ such that additionally,
$|f_i|\rho^{\deg(\beta_i)}\rightarrow0$ for all $\rho>1$. Equivalently, the sequence $(f_i\chi_x(\beta_i))_i$
tends to zero for all $x\in X(\CC_\infty)\setminus\{\infty\}$. 
We denote by $f(x)$ or $f|_x$ the sum of the convergent series 
$\sum_if_i\chi_x(\beta_i)\in\CC_\infty$. This is the {\em evaluation of the entire function $f$ at $x\in X(\CC_\infty)\setminus\{\infty\}$}.
It is not difficult to see that this evaluation does not depend on the choice of the basis $\mathcal{B}$, and that the evaluations taken with respect to different bases agree. Also, let $X^{an}_{\CC_\infty}$ be the analytification of $X$ over $\CC_\infty$. It is easy to verify that if $f=\sum_{i\in\mathcal{I}} \beta_i f_i$ is an entire function over $X(\CC_\infty)\setminus\{\infty\}$,
then the everywhere convergent series defines a rigid analytic function
$X^{an}_{\CC_\infty}\setminus\{\infty\}\rightarrow\CC_\infty$. In the case $X=\PP^1$ with $\infty$ an $\FF_q$-rational point, this 
coincides with the usual notion of an entire function $f:\CC_\infty\rightarrow\CC_\infty$ (see \cite[Chapter 3]{Gos96}). We denote by $\EE$ the 
$\CC_\infty$-algebra of entire functions $X(\CC_\infty)\setminus\{\infty\}\rightarrow\CC_\infty$.

The unique continuous and open $A \otimes 1$-linear automorphism $\tau$ of $\TT(\CC_\infty)$ extending $c\mapsto c^q$ over $1 \otimes \CC_\infty$ induces an automorphism of $\EE$. Explicitly,  if
$$f=\sum_{i\in\mathcal{I}}\beta_i f_i$$
is the series expansion of the entire function $f$, relative to the basis $\mathcal{B}$, then
$$f^{(k)}:=\tau^k(f)=\sum_{i\in\mathcal{I}}\beta_i f_i^{q^k},\quad k\in\ZZ,$$
is another entire function over $X(\CC_\infty)\setminus\{\infty\}$, well defined independently of the choice of the basis $\mathcal{B}$. If $x\mapsto x^{(1)}$ denotes the Frobenius map $X(\CC_\infty)\rightarrow X(\CC_\infty)$
then $f^{(1)}(x^{(1)})=f(x)^q$ for all $x\in X(\CC_\infty)\setminus\{\infty\}$. 

We consider the rational functions $X(\CC_\infty)\setminus\{\infty\}\rightarrow\CC_\infty$
\begin{equation} \label{eq:LTate}
\mathcal{S}_d:=\sum_{a\in A_{+,d}}\frac{\chi(a)}{a} \in A \otimes_{\Fq} K.
\end{equation}
Note that they are also entire in the above terminology.

In the present paper, we study the functions $\tau^n(Z_k)$ where, for all $k\geq 0$, we define $Z_k$ by the formal series
$$Z_k:=\sum_{d\geq 0}\mathcal{S}_dS_{<d}\Big(q-1,(q-1)q,\ldots,(q-1)q^{k-1}\Big)$$
which converges in $\TT(K)$. They are all entire functions, as shown by the following lemma:

\begin{lemma}\label{lemma-entire}
For all $k\geq 0$, the series $Z_k\in\TT(K)$ defines an entire function $X(\CC_\infty)\setminus\{\infty\}\rightarrow\CC_\infty$.
\end{lemma}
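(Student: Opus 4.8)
The plan is to realize the partial sums of $Z_k$ inside the Fr\'echet space $\EE$ of entire functions over $X(\CC_\infty)\setminus\{\infty\}$, topologized by the family of Gauss norms $\|f\|_\rho:=\sup_i|f_i|\,\rho^{\deg(\beta_i)}$ ($\rho>1$), where $\mathcal B=(\beta_i)_i$ is a fixed $\FF_q$-basis of $A$ adapted to the degree filtration (so that $\{\beta_i:\deg(\beta_i)\le m\}$ is an $\FF_q$-basis of $A(\le m)$ for every $m$). Recall that $f=\sum_i\beta_i f_i\in\TT(\CC_\infty)$ is entire precisely when $\|f\|_\rho<\infty$ for every $\rho>1$, that $\EE$ is complete for this family of norms, and that each $\|\cdot\|_\rho$ is multiplicative. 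Since every $\mathcal S_d\in A\otimes_{\FF_q}K$ is entire (it is a ``polynomial''), and $Z_k=\sum_{d\ge0}\mathcal S_d\,S_{<d}(\mathfrak s_k)$ converges in $\TT(K)$ (with $\mathfrak s_k:=(q-1,(q-1)q,\dots,(q-1)q^{k-1})$), it is enough to check that $\|\mathcal S_d\,S_{<d}(\mathfrak s_k)\|_\rho\to0$ as $d\to\infty$ for each $\rho>1$: then the partial sums are Cauchy in $\EE$, so their limit $Z_k$ lies in $\EE$. As $S_{<d}(\mathfrak s_k)\in K$ we have $\|S_{<d}(\mathfrak s_k)\|_\rho=|S_{<d}(\mathfrak s_k)|_\infty$, and this is bounded because $S_{<d}(\mathfrak s_k)$ converges in $K_\infty$ to $\zeta_A(\mathfrak s_k)$; hence the whole problem reduces to the single estimate
\[
\|\mathcal S_d\|_\rho\xrightarrow[\ d\to\infty\ ]{}0\qquad\text{for every }\rho>1 .
\]

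To prove this estimate one must exploit the cancellation among the $q^{d-g}$ monic elements of degree $d$. Writing $\mathcal S_d=\sum_i\beta_i(\mathcal S_d)_i$, only indices with $\deg(\beta_i)\le d$ contribute, and the coefficient $(\mathcal S_d)_i$ is a ``twisted power sum'' $\sum_{a\in A_{+,d}}\beta_i^{\vee}(a)\,a^{-1}\in K$; the naive ultrametric bound gives only $v_\infty\bigl((\mathcal S_d)_i\bigr)\ge d$, i.e. $\|\mathcal S_d\|_\rho\le(\rho/q)^d$, which is not enough for $\rho\ge q$. The point to establish is that $v_\infty$ of these coefficients in fact grows \emph{super-linearly} in $d$, uniformly in $i$ with $\deg(\beta_i)\le d$ — of the order of $q^{d}$. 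In genus $0$ this can be read off from Carlitz's formulas (for instance $S_d(1)=(-1)^d/L_d$ with $v_\infty(L_d)=q+q^2+\dots+q^d$, together with the fact that every monic $a$ of degree $\le d$ divides $L_d$, so that $L_d\mathcal S_d$ is a polynomial and the denominators of the $(\mathcal S_d)_i$ are forced to be large). In general one would deduce it from the standard sign-normalized Drinfeld--Hayes module $\phi$ attached to $A$ — more precisely from a non-commutative factorization of its Carlitz--Hayes polynomials — together with Riemann--Roch to bound the degrees of the relevant denominators. Granting any bound of the form $v_\infty\bigl((\mathcal S_d)_i\bigr)\ge c\,q^{d}$ with $c>0$, one obtains $\|\mathcal S_d\|_\rho\le q^{-c\,q^{d}}\rho^{d}\to0$ for every fixed $\rho>1$, which completes the argument.

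A variant that may be organizationally cleaner is induction on $k$. Using $S_i(qn)=S_i(n)^q$ one has $S_{<d}(\mathfrak s_k)=\sum_{i<d}S_i(q-1)\bigl(S_{<i}(\mathfrak s_{k-1})\bigr)^q$, and exchanging the summation order yields
\[
Z_k=\zeta_A(\mathfrak s_k)\,Z_0-\sum_{i\ge0}\Bigl(\,\textstyle\sum_{d\le i}\mathcal S_d\Bigr)\,S_i(q-1)\,\bigl(S_{<i}(\mathfrak s_{k-1})\bigr)^q .
\]
Here $Z_0$ is, up to the constant term $\mathcal S_0=1$, Pellarin's $L$-series $\sum_{a\in A_+}\chi(a)/a$, which is entire, and multiplying by $\zeta_A(\mathfrak s_k)\in K_\infty$ preserves entireness. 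The remaining series is a sum of polynomials whose $\rho$-Gauss norms are dominated by $\max_{d\le i}\|\mathcal S_d\|_\rho\cdot|S_i(q-1)|_\infty\cdot\|S_{<i}(\mathfrak s_{k-1})\|_\infty^{q}$, which tends to $0$ as soon as $v_\infty(S_i(q-1))$ grows super-linearly in $i$ (while $S_{<i}(\mathfrak s_{k-1})$ stays bounded). Either way, \textbf{the single hard input is the same}: a uniform super-linear lower bound for the $\infty$-adic valuation of the power sums (equivalently, of the coefficients of $\mathcal S_d$). The functional-analytic packaging around it is routine, so I would concentrate the effort on that arithmetic estimate.
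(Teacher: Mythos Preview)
Your functional-analytic reduction is correct: completeness of $\EE$ for the family $\{\|\cdot\|_\rho\}_{\rho>1}$, together with the boundedness of $|S_{<d}(\mathfrak s_k)|$, does reduce the lemma to the single estimate $\|\mathcal S_d\|_\rho\to0$ for every $\rho>1$. But you do not prove that estimate --- you explicitly write ``Granting any bound of the form $v_\infty\bigl((\mathcal S_d)_i\bigr)\ge c\,q^d$'' --- and the routes you sketch do not supply it. The Carlitz--Hayes factorization concerns the polynomials $\widetilde{\mathcal E}_n(z)=\prod_{a\in A(<n)}(z-a)$ and gives information on the scalar power sums $S_i(q-1)$, not on the $A\otimes 1$-valued functions $\mathcal S_d$; in your genus-$0$ remark, it is not true that every monic $a$ of degree $\le d$ divides $L_d$ (e.g.\ $\theta^2+1$ does not), so ``$L_d\mathcal S_d$ is a polynomial'' is unfounded. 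Your inductive variant is circular for $k=0$: the entireness of $Z_0$ is exactly the base case of the lemma, not a prior input, and for general $A$ it is not established elsewhere in the paper before this point.

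The paper obtains the key estimate by an entirely different device, a $p$-adic interpolation argument adapted from \cite[Lemma~7]{AP14}. One compares $a^{-1}$ with $a^{y_r}$ where $y_r=q^{r+1}-1\to-1$ in $\ZZ_p$: writing $a=\langle a\rangle\pi^{-d}$ with $|\langle a\rangle-1|<1$, the binomial coefficients $\binom{-1}{j}$ and $\binom{y_r}{j}$ agree for $j<q^{r+1}$, so $|\langle a\rangle^{-1}-\langle a\rangle^{y_r}|<|\pi|^{q^{r+1}}$. On the other hand, $\sum_{a\in A_{+,d}}\chi(a)a^{y_r}$ is a sum over an $\FF_q$-affine space of a product of $\FF_q$-linear forms, and Goss's lemma \cite[Lemma~8.8.1]{Gos96} forces it to vanish once $1+(r+1)(q-1)<\dim_{\FF_q}A(<d)$. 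Choosing $r\sim\dim_{\FF_q}A(<d)\sim d-g$ then yields $\|\mathcal S_d\|_\rho\le\rho^d|\pi|^{d+q^{r+1}}$, which tends to $0$ for every $\rho>1$ since $q^{r+1}$ grows like $q^{d}$. That vanishing-plus-interpolation step is the missing arithmetic input in your proposal.
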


\begin{proof}
It suffices to show that for all $x\in X(\CC_\infty)\setminus\{\infty\}$, the sequence of evaluations
$$\big(\mathcal{S}_i|_x\big)_{i\geq0}$$ of $\mathcal{S}_i$ at $x$ tends to zero uniformly for the standard admissible covering of $X^{an}_{\CC_\infty}\setminus\{\infty\}$.

The proof we give is a simple adaptation of the proof of \cite[Lemma 7]{AP14}.
We set, for $r\geq 0$, $y_r=q^{r+1}-1$ which $p$-adically tends to $-1\in\ZZ_p$ as $r \to \infty$. Note that for all $r\geq 0$, assuming that $A_{+,i}\neq\emptyset$,
$$\sum_{a\in A_{+,i}}\chi(a)a^{y_r}=\sum_{b\in A(<i)}L(\eta_i+b)L_0(\eta_i+b)^{q-1}\cdots L_r(\eta_i+b)^{q-1}\in A\otimes_{\FF_q}A,$$
where $\eta_i$ is any element of $A_{+,i}$, $L$ is the $\FF_q$-linear form $a\mapsto a \otimes 1$
and $L_j$ is the $\FF_q$-linear form $a\mapsto 1 \otimes a^{q^j}$ for $j=0,\ldots,r$. By \cite[Lemma 8.8.1]{Gos96}, $\sum_{a\in A_{+,i}}\chi(a)a^{y_r}$ vanishes if $1+(r+1)(q-1)<\dim_{\FF_q}(A(<i))$. We can therefore choose
\begin{equation*}
r=\dim_{\FF_q}(A(<i))-\begin{cases}
2 \quad \text{if } q>2, \\
3 \quad \text{otherwise.}
\end{cases}
\end{equation*}
assuming at once $i$ large enough so that $r$ is non-negative. In particular we have, for this choice of $r$, the identity $\sum_{a\in A_{+,i}}\chi(a) a^{y_r}=0$.

We consider $a\in A_{+,i}$. Since $\epsilon(a)=1$, there exists a unique element $\langle a\rangle\in K_\infty^\times\cap\operatorname{Ker}(\epsilon)$, depending on $\pi$, such that $a=\langle a\rangle\pi^{-i}$. Note that $|\langle a\rangle -1|\leq |\pi|<1$. We have
$$\langle a\rangle^{-1}-\langle a \rangle^{y_r}=\sum_{j\geq 0}\Big(\binom{-1}{j}-\binom{y_r}{j}\Big)(\langle a\rangle-1)^j\in\FF_q[[\pi]].$$ Since $\binom{-1}{j}-\binom{y_r}{j}=0$ for $j=0,\ldots,q^{r+1}-1$, the above difference has multiplicative valuation strictly smaller than $|\pi|^{q^{r+1}}$. This implies that 
$\mathcal{S}_{i}\in A \otimes_{\FF_q} K_\infty$ has the property that there is a finite expansion
$\mathcal{S}_{i}=\sum_{a \in A_{+,i}} a \otimes \alpha_a$ with, for $a \in A_{+,i}$, $\alpha_a \in K_\infty$ such that $|\alpha_a|<|\pi|^{i+q^{r+1}}$. From this we deduce that 
the sequence of entire functions $\mathcal{S}_i$ tends to zero uniformly for the standard admissible covering of $X^{an}_{\CC_\infty}\setminus\{\infty\}$. The covering is given by the filtered union
$$\bigcup_{\begin{smallmatrix}\rho\in|\CC_\infty^\times| \\ \rho>1\end{smallmatrix}}\operatorname{Spm}\Big(\widehat{A \otimes \CC_\infty}_{\|\cdot\|_\rho}\Big),$$
where $\operatorname{Spm}$ denotes the maximal spectrum, and where we take the completions with respect to $\|\cdot\|_\rho$ the multiplicative valuation which restricts to $|\cdot|$ over $1 \otimes \CC_\infty$, 
such that $\|a\otimes 1\|_\rho=\rho^{\deg(a)}$. By the above discussion, for all $x\in 
\operatorname{Spm}\big(\widehat{A \otimes \CC_\infty}_{\|\cdot\|_\rho}\big)$ we have, by the maximum principle, and for large values of $i$:
$$|\mathcal{S}_i(x)|\leq \|\mathcal{S}_i\|_\rho\leq \rho^i |\pi|^{i+q^{r+1}}$$ which tends to zero for any fixed
$\rho>1$ as $i\rightarrow\infty$.\end{proof}

\begin{remark}
The entire function $Z_k$ can be viewed as an example of multiple zeta value in the Tate algebra $\TT(K)$. 
These functions have been investigated in 
\cite{GP21} in the case of $X=\PP^1$ and $\infty$ an $\FF_q$-rational point. We will not pursue the
general theory of such functions because in the present paper we only need to work with the entire functions
$$\tau^n(Z_k),\quad n,k\geq 0.$$
However, we mention here that, in agreement with the notations of ibid.,
we could have written:
$$Z_k=\zeta_A\begin{pmatrix}\{1\} & \emptyset & \cdots & \emptyset \\ 1 & q-1 & \cdots & (q-1)q^{k-1}\end{pmatrix}\in \TT(K),$$
to stress the analogy. Note that, following this notation, we have $Z_0=\zeta_A\binom{\{1\}}{1}$. We also set $Z_{-1}:=0$. Also, Lemma \ref{lemma-entire} can be easily generalized to match with the formalism of 
\cite{GP21} and the reader can see
that all the functions that naturally arise in this picture define entire functions over products of the curve
$X(\CC_\infty)\setminus\{\infty\}$ in an appropriate sense.
\end{remark}


\section{A universal formula}

\subsection{Auxiliary results}

We recall several results from \cite[\S 0 and \S 1]{Tha92}. For all $i\in\ZZ^{\geq0}$ and $n \in \mathbb N$, we set, with appropriate conventions for empty sums and products, 
\begin{align*}
D_i &:=\prod_{a \in A_{+,i}} a, \\
e_i(z) &:=\prod_{a \in A(<i)} (z-a), \\
\frac{e_i(z)}{D_i} &:= \sum_{k \geq 0} A_{ik} z^{q^k} \\
S_i(n) &:= \sum_{a \in A_{+,i}} \frac{1}{a^n}, \\
S_i & :=S_i(1) =\sum_{a \in A_{+,i}} \frac{1}{a}.
\end{align*}
If $A_{+,i} \neq \emptyset$, then by \cite[Eq. (13)]{Tha92},
	\[ e_{i+1}(z)=e_i(z)^q-D_i^{q-1} e_i(z). \]
Otherwise, if $A_{+,i} = \emptyset$, we get $e_{i+1}(z)=e_i(z)$.
In particular, for all $i \in \ZZ^{\geq0}$, $D_{i+1} A_{(i+1)0}$ and $D_i^q A_{i0}$ agree up to a sign so that we can write $D_{i+1} A_{(i+1)0}=\pm D_i^q A_{i0}$ and 
	\[ A_{i0}=\pm \frac{(D_0 D_1 \dots D_{i-1})^{q-1}}{D_i} \neq 0. \]

Now, if $A_{+,i} \neq \emptyset$, by \cite[Eq. (18)]{Tha92}, we have
	\[ \frac{A_{i0}}{1-\sum_{k=0}^i A_{ik} z^{q^k}}=-\sum_{a \in A_{+,i}} \frac{1}{z-a}=\sum_{n \geq 0} S_i(n+1) z^n. \]
Consequently, 
\begin{equation} \label{Si nonzero}
S_i=S_i(1)=A_{i0} \neq 0,
\end{equation}
and for all $1 \leq n \leq q-1$, 
\begin{equation} \label{Si(n)}
S_i(n) = S_i^n.
\end{equation}

For all $d \in \mathbb Z^{\geq 0}$, we recall that we have set (see \eqref{eq:LTate})
\begin{align*}
\mathcal S_d &=\sum_{a \in A_{+,d}} \frac{\chi(a)}{a}.
\end{align*}
We additionally set 
\begin{align*}
\mathcal S_{<d} :=\sum_{i=0}^{d-1}\mathcal S_i,\quad 
S_{<d} :=\sum_{i=0}^{d-1} S_i.
\end{align*}
We put $\mathcal S_{\leq d}= \mathcal S_{<d+1}$ and $S_{\leq d}=S_{<d+1}$.

If $D$ is a divisor over $X_{\overline{K}}$, then we denote by $\mathcal{L}(D)$ the $\overline{K}$-subvector space of the fraction field of $X_{\overline K}$ consisting of functions $f$ such that $(f)+D$ is effective: $(f)\geq -D$. For all $m \in \mathbb Z^{\geq 0}$, we define $j_m$ to be the smallest non-negative integer such that $\dim_{\overline K} \mathcal{L}(j_m\infty)=m+1$. Note that $\mathcal{L}(j_m (\infty))=A(\leq j_m)\otimes_{\FF_q}\overline{K}$,
where $A(\leq \ell)$ is the $\FF_q$-vector space spanned by the elements $a \in A$ such that $\deg(a)\leq \ell$. We have $j_0=0<j_1<j_2<...$ and for $m$ sufficiently large, $j_m=m+g$ by the Riemann-Roch theorem. 

\begin{lemma} \label{zeros}
For all $m\geq 1$, the function $\mathcal S_{j_m}$ vanishes at $\Xi,\dots,\Xi^{(m-1)}$.
\end{lemma}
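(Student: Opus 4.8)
The plan is to evaluate $\mathcal{S}_{j_m}$ at the twisted points $\Xi,\Xi^{(1)},\dots,\Xi^{(m-1)}$ in closed form, and thereby reduce the statement to an elementary Chevalley--Warning-type vanishing. First I would compute the evaluations. Since $\chi_\Xi$ is the inclusion $A\subset\CC_\infty$ and $\chi(a)=a\otimes 1$ is fixed by $\tau$, the relation $f^{(1)}(x^{(1)})=f(x)^q$ applied to $f=\chi(a)$ gives $\chi_{\Xi^{(i)}}(a)=a^{q^i}$ for all $a\in A$ and $i\ge 0$. As $\mathcal{S}_{j_m}$ is a \emph{finite} sum, its evaluation at any point is computed term by term, so for $0\le i\le m-1$
\[
\mathcal{S}_{j_m}\big(\Xi^{(i)}\big)=\sum_{a\in A_{+,j_m}}\frac{\chi_{\Xi^{(i)}}(a)}{a}=\sum_{a\in A_{+,j_m}}a^{q^i-1}\in A .
\]
Hence it suffices to prove that $\sum_{a\in A_{+,j_m}}a^{q^i-1}=0$ for every $i$ with $0\le i\le m-1$.

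Next I would coordinatize $A_{+,j_m}$. Fix $\eta\in A_{+,j_m}$, so that $A_{+,j_m}=\eta+A(<j_m)$. From the strict inequalities $j_0<j_1<\cdots$ and the minimality built into the definition of $j_m$ one has $j_{m-1}\le j_m-1$, and therefore $\dim_{\FF_q}A(<j_m)=\dim_{\FF_q}A(\le j_m-1)=m$. Choosing an $\FF_q$-basis $b_1,\dots,b_m$ of $A(<j_m)$ then turns the sum above into
\[
\sum_{a\in A_{+,j_m}}a^{q^i-1}=\sum_{(c_1,\dots,c_m)\in\FF_q^m}\big(\eta+c_1b_1+\cdots+c_mb_m\big)^{q^i-1}.
\]

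The key step is a degree collapse for the summand, followed by summation over $\FF_q^m$. Writing $q^i-1=\sum_{j=0}^{i-1}(q^{j+1}-q^j)$ gives $a^{q^i-1}=\prod_{j=0}^{i-1}(a^{q^j})^{q-1}$, and since $c^{q^j}=c$ for every $c\in\FF_q$, as a function on $\FF_q^m$ we have
\[
\big(\eta+\sum_{l=1}^m c_lb_l\big)^{q^i-1}=\prod_{j=0}^{i-1}\big(\eta^{q^j}+\sum_{l=1}^m c_l\,b_l^{q^j}\big)^{q-1}.
\]
The right-hand side is a polynomial in $c_1,\dots,c_m$ with coefficients in $A$ of total degree at most $i(q-1)\le(m-1)(q-1)<m(q-1)$. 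By the standard fact that $\sum_{c\in\FF_q}c^e=0$ unless $e$ is a positive multiple of $q-1$, every monomial $c_1^{e_1}\cdots c_m^{e_m}$ with $e_1+\cdots+e_m<m(q-1)$ has vanishing sum over $\FF_q^m$; hence the whole sum vanishes, which finishes the proof.

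The one delicate point — the step I expect to carry the weight of the argument — is precisely this degree count. A naive comparison of the exponent $q^i-1$ with $m(q-1)$ fails badly as soon as $i$ is close to $m$. The resolution is that $a^{q^i-1}$, although of large \emph{formal} degree $q^i-1$ in the coordinates $c_l$, reduces modulo the field relations $c_l^{q}=c_l$ to a genuine product of $i$ many $(q-1)$-st powers of affine-linear forms, hence to a function of degree only $i(q-1)$; one must then check that this reduced degree stays strictly below $m(q-1)$, which is exactly what the hypothesis $i\le m-1$ provides once one knows $\dim_{\FF_q}A(<j_m)=m$.
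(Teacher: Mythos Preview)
Your proof is correct and follows essentially the same route as the paper's: both reduce to showing $\sum_{a\in A_{+,j_m}}a^{q^i-1}=0$ by writing $A_{+,j_m}=\eta+V$ with $\dim_{\FF_q}V=m$, factoring $a^{q^i-1}=\prod_{j=0}^{i-1}(a^{q^j})^{q-1}$ as a product of $(q-1)$-st powers of affine-linear forms, and invoking the degree bound $i(q-1)<m(q-1)$. The only difference is cosmetic---the paper packages the final vanishing as a citation of \cite[Lemma 8.8.1]{Gos96}, whereas you unwind that lemma inline via the monomial-sum argument over $\FF_q^m$.
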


\begin{proof}
Let us choose an element $\eta_m\in A_{+,j_m}$.
Then $A_{+,j_m}=\eta_m+A(\leq j_{m-1})$. Note that by the definition of the sequence $\{j_m\}_{m \geq 0}$, $\dim_{\FF_q} A(\leq j_{m-1})=m$. Since
$$\mathcal{S}_{j_m}(\Xi^{(j)})=\sum_{a \in A_{+,j_m}} a^{q^j-1}=\sum_{b \in A(\leq j_{m-1})} L_0(\eta_m+b)^{q-1}\cdots L_{j-1}(\eta_m+b)^{q-1}$$ where $L_i$ is the $\FF_q$-linear form $a\mapsto a^{q^i}$ for $i=0,\ldots,j-1$, $\mathcal{S}_{j_m}(\Xi^{(j)})$ vanishes for $j=0,\ldots,m-1$ in virtue of \cite[Lemma 8.8.1]{Gos96}.
\end{proof}

\begin{lemma} \label{basis}
The set $\{\mathcal S_{j_0},\dots,\mathcal S_{j_m}\}$ forms a basis of $\mathcal L(j_m \infty)$.
\end{lemma}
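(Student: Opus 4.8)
The plan is to show that the $m+1$ functions $\mathcal S_{j_0},\dots,\mathcal S_{j_m}$ all lie in $\mathcal L(j_m\infty)$ and are linearly independent over $\overline K$; since $\dim_{\overline K}\mathcal L(j_m\infty)=m+1$ by the very definition of $j_m$, this forces them to be a basis. For the containment, recall that $\mathcal S_{j_\ell}=\sum_{a\in A_{+,j_\ell}}\chi(a)/a$ is, for each $\ell$, a $\overline K$-linear combination of the functions $\chi(a)$ with $a\in A_{+,j_\ell}\subset A(\leq j_\ell)\subset A(\leq j_m)$; since each such $\chi(a)$ has a pole of order $\deg(a)\leq j_m$ at $\infty$ and is regular elsewhere, we get $\mathcal S_{j_\ell}\in\mathcal L(j_m\infty)$ for all $0\leq\ell\leq m$. (Note $\mathcal S_{j_0}=\mathcal S_0=\sum_{a\in A_{+,0}}\chi(a)/a=1\in\mathcal L(0)$.)

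The heart of the argument is linear independence, and here I would exploit the evaluation maps at the Frobenius twists $\Xi,\Xi^{(1)},\Xi^{(2)},\dots$ of the distinguished point $\Xi$, together with Lemma \ref{zeros}. Suppose $\sum_{\ell=0}^m c_\ell\,\mathcal S_{j_\ell}=0$ with $c_\ell\in\overline K$. First evaluate at $\Xi$: by Lemma \ref{zeros} each $\mathcal S_{j_\ell}$ with $\ell\geq 1$ vanishes at $\Xi$ (it vanishes at $\Xi,\dots,\Xi^{(\ell-1)}$, which includes $\Xi$ as soon as $\ell\geq 1$ — one has to be a bit careful when $\ell=1$, where Lemma \ref{zeros} as literally stated gives vanishing on the empty set; but $\mathcal S_{j_1}(\Xi)=S_{j_1}=S_{j_1}(1)=A_{j_1,0}\neq 0$ by \eqref{Si nonzero}, so $\Xi$ is \emph{not} a common zero and one instead separates the $c_\ell$ using a staircase of twists). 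More precisely: evaluating the relation at $\Xi$ kills every term with $\ell\geq 2$ by Lemma \ref{zeros}, leaving $c_0\mathcal S_{j_0}(\Xi)+c_1\mathcal S_{j_1}(\Xi)=c_0+c_1 S_{j_1}=0$; evaluating at $\Xi^{(1)}$ kills every term with $\ell\geq 3$, leaving a relation in $c_0,c_1,c_2$; inductively, evaluating at $\Xi^{(\ell-1)}$ produces a relation involving only $c_0,\dots,c_\ell$ whose ``new'' coefficient is $\mathcal S_{j_\ell}(\Xi^{(\ell-1)})$. So the obstruction reduces to showing that the triangular system is nondegenerate, i.e. that $\mathcal S_{j_\ell}(\Xi^{(\ell-1)})\neq 0$ for every $\ell\geq 1$.

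That nonvanishing is the main obstacle, and I would extract it from the proof of Lemma \ref{zeros} itself: there one computes $\mathcal S_{j_\ell}(\Xi^{(j)})=\sum_{b\in A(\leq j_{\ell-1})}L_0(\eta_\ell+b)^{q-1}\cdots L_{j-1}(\eta_\ell+b)^{q-1}$ and the vanishing for $j\leq \ell-1$ comes from \cite[Lemma 8.8.1]{Gos96} applied to the $m=\dim_{\FF_q}A(\leq j_{\ell-1})$-dimensional space. For $j=\ell-1$ the relevant sum is over the single remaining ``top'' coordinate, and one checks it equals (up to sign and up to the factor $D_{j_\ell}^{q-1}\cdots$) a nonzero quantity built from the $D_i$'s and $A_{i0}$'s — concretely, one can identify $\mathcal S_{j_\ell}(\Xi^{(\ell-1)})$ with the leading term in the analysis above \eqref{Si nonzero}, and invoke $A_{i0}\neq 0$. (Alternatively, and perhaps cleanly: were $\mathcal S_{j_\ell}$ to vanish at $\Xi,\dots,\Xi^{(\ell-1)}$ \emph{and} $\Xi^{(\ell)}$, it would vanish at $\ell+1$ points in ``general position''; combined with its pole order $\leq j_\ell$ at $\infty$ and a degree/Riemann-Roch count this would force $\mathcal S_{j_\ell}=0$, contradicting $\mathcal S_{j_\ell}(\Xi^{(\ell-1)})$ being the top nonzero coefficient.) Finally, from nondegeneracy of the triangular system we conclude $c_0=\dots=c_m=0$, hence linear independence, hence — by the dimension count — that $\{\mathcal S_{j_0},\dots,\mathcal S_{j_m}\}$ is a basis of $\mathcal L(j_m\infty)$.
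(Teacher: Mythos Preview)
Your opening (containment in $\mathcal L(j_m\infty)$ and the dimension count) is fine, but the linear-independence argument has a genuine gap and is, in any case, far more elaborate than what is needed.

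The concrete errors: you claim $\mathcal S_{j_1}(\Xi)=S_{j_1}$, but this is false. Since $\chi_\Xi$ is the inclusion $A\hookrightarrow\CC_\infty$, one has $\mathcal S_d(\Xi)=\sum_{a\in A_{+,d}}a/a=|A_{+,d}|\cdot 1$, which equals $1$ for $d=0$ and equals $0$ in $\FF_q$ for every $d\geq 1$ (because $|A_{+,d}|$ is then a positive power of $q$). So Lemma~\ref{zeros} for $m=1$ is \emph{not} vacuous: $\mathcal S_{j_1}$ really vanishes at $\Xi=\Xi^{(0)}$. Your triangular system is therefore shifted by one throughout: evaluating $\sum_\ell c_\ell\,\mathcal S_{j_\ell}=0$ at $\Xi^{(k)}$ kills all terms with $\ell\geq k+1$, and the ``new'' diagonal entry is $\mathcal S_{j_k}(\Xi^{(k)})$, not $\mathcal S_{j_k}(\Xi^{(k-1)})$ --- the latter is $0$ by Lemma~\ref{zeros}. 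With the indexing corrected you need $\mathcal S_{j_k}(\Xi^{(k)})\neq 0$, but that is precisely Lemma~\ref{nonvanishing}, which in the paper comes \emph{after} Lemma~\ref{basis} and is \emph{proved using} Lemma~\ref{basis}. Your sketched justifications for this nonvanishing do not stand on their own: the ``top coordinate'' sum at $\Xi^{(\ell-1)}$ is the one that vanishes, and the Riemann--Roch count you allude to is essentially the argument of Lemma~\ref{nonvanishing}, which again needs the linear independence you are trying to establish.

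The paper's proof bypasses all of this with a single observation: $\mathcal S_{j_k}=\sum_{a\in A_{+,j_k}}\chi(a)/a$ has a pole at $\infty$ of order \emph{exactly} $j_k$. Indeed, writing each $a\in A_{+,j_k}$ as $\eta_{j_k}+b$ with $b\in A(<j_k)$, the coefficient of $\chi(\eta_{j_k})$ is $\sum_{a\in A_{+,j_k}}1/a=S_{j_k}\neq 0$ by \eqref{Si nonzero}. Rational functions with pairwise distinct pole orders at a single point are automatically linearly independent over $\overline K$; since the $j_k$ are distinct and $\dim_{\overline K}\mathcal L(j_m\infty)=m+1$, the result follows. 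No evaluation at the points $\Xi^{(k)}$ is needed here --- those evaluations are what the paper uses \emph{afterwards}, with Lemma~\ref{basis} already in hand, to prove Lemma~\ref{nonvanishing} and Theorem~\ref{key identity}.
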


\begin{proof}
We observe that for all $k \in \mathbb Z^{\geq 0}$, $\mathcal S_{j_k}$ has a pole of order $j_k$ at $\infty$. The integers $j_0,\dots,j_m$ being distinct, $\mathcal S_{j_0},\dots,\mathcal S_{j_m}$ are linearly independent over $\overline K$. Since $\dim_{\overline K} \mathcal L(j_m \infty)=m+1$, the Lemma follows.
\end{proof}

\begin{lemma} \label{nonvanishing}
The function $\mathcal S_{j_m}$ does not vanish at $\Xi^{(m)}$.
\end{lemma}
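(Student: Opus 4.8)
The plan is to evaluate $\mathcal S_{j_m}$ at $\Xi^{(m)}$ in closed form and to recognize the answer as a product of two quantities that have already been shown to be nonzero. We may assume $m\ge 1$, since $\mathcal S_{j_0}=\mathcal S_0=1$ is constant.

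As in the proof of Lemma~\ref{zeros}, I would fix $\eta\in A_{+,j_m}$; then $A_{+,j_m}=\eta+V$ with $V:=A(\le j_{m-1})=A(<j_m)$ an $\FF_q$-subspace of dimension $m$ (both equalities of dimension, and the fact that $A_{+,j_m}\neq\emptyset$, follow from the definition of the sequence $(j_m)$), and for every $k\ge 0$ one has $\mathcal S_{j_m}(\Xi^{(k)})=\sum_{a\in A_{+,j_m}}a^{q^k-1}$. By Lemma~\ref{zeros} this vanishes for $0\le k\le m-1$, so everything reduces to showing $\sum_{a\in A_{+,j_m}}a^{q^m-1}\ne 0$.

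The key step is to identify the monic polynomial vanishing exactly on $A_{+,j_m}$. Since $V$ is an $\FF_q$-subspace, $e_{j_m}(z)=\prod_{a\in A(<j_m)}(z-a)=\prod_{b\in V}(z-b)$ is $\FF_q$-linear and monic of degree $q^m$, whence
\[
\prod_{a\in A_{+,j_m}}(z-a)=\prod_{b\in V}\bigl((z-\eta)-b\bigr)=e_{j_m}(z-\eta)=e_{j_m}(z)-e_{j_m}(\eta),
\]
and $e_{j_m}(\eta)=\prod_{b\in V}(\eta-b)=\prod_{a\in A_{+,j_m}}a=D_{j_m}$. Writing $e_{j_m}(z)=z^{q^m}+\sum_{k=0}^{m-1}c_k z^{q^k}$ (with $c_k=D_{j_m}A_{j_m,k}$, in the notation recalled above), every $a\in A_{+,j_m}$ satisfies $a^{q^m}=D_{j_m}-\sum_{k=0}^{m-1}c_k a^{q^k}$. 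Dividing by $a$ (nonzero) and summing over $A_{+,j_m}$: the terms $\sum_a a^{q^k-1}$ for $1\le k\le m-1$ vanish by Lemma~\ref{zeros}, and the term $c_0\lvert A_{+,j_m}\rvert=c_0 q^m$ vanishes in characteristic $p$, so
\[
\mathcal S_{j_m}(\Xi^{(m)})=\sum_{a\in A_{+,j_m}}a^{q^m-1}=D_{j_m}\sum_{a\in A_{+,j_m}}\frac1a=D_{j_m}\,S_{j_m}.
\]
Since $D_{j_m}\ne 0$ and $S_{j_m}=A_{j_m,0}\ne 0$ by \eqref{Si nonzero} (valid because $A_{+,j_m}\ne\emptyset$), we conclude $\mathcal S_{j_m}(\Xi^{(m)})\ne 0$.

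I do not expect a serious obstacle here: the whole content is the $\FF_q$-linear identity $\prod_{a\in A_{+,j_m}}(z-a)=e_{j_m}(z)-D_{j_m}$, which converts the ``top'' power sum $\sum_a a^{q^m-1}$, via the functional equation satisfied by its roots, into a combination of the ``lower'' power sums $\mathcal S_{j_m}(\Xi^{(k)})$, $k<m$ (all zero by Lemma~\ref{zeros}), plus the elementary nonzero term $D_{j_m}S_{j_m}$. The only mild care is bookkeeping --- checking $\dim_{\FF_q}V=m$, that $A_{+,j_m}\ne\emptyset$, and the characteristic-$p$ cancellations --- all routine given the definition of $(j_m)$ and the facts recalled before Lemma~\ref{zeros}.
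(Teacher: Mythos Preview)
Your argument is correct and takes a genuinely different route from the paper. The paper argues by contradiction via Riemann--Roch: assuming $\mathcal S_{j_m}(\Xi^{(m)})=0$, it places the $k-m+1$ linearly independent functions $\mathcal S_{j_m},\ldots,\mathcal S_{j_k}$ inside $\mathcal L(j_k\infty-\Xi-\cdots-\Xi^{(m)})$, whose dimension for large $k$ is only $k-m$. Your proof is instead a direct computation: using the $\FF_q$-linearity of $e_{j_m}$ to write $\prod_{a\in A_{+,j_m}}(z-a)=e_{j_m}(z)-D_{j_m}$, the functional equation for the roots reduces $\sum_a a^{q^m-1}$ to the lower power sums $\sum_a a^{q^k-1}$ with $k<m$, which vanish by Lemma~\ref{zeros} (and by $|A_{+,j_m}|\equiv 0$ in characteristic $p$ for $k=0$), leaving the explicit nonzero value $\mathcal S_{j_m}(\Xi^{(m)})=D_{j_m}S_{j_m}$.

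Each approach has its virtue. The paper's argument is purely geometric and would adapt to other configurations of evaluation points without any explicit bookkeeping. Your argument is more elementary---it uses no Riemann--Roch beyond what is already encoded in $\dim_{\FF_q}A(<j_m)=m$---and it yields the closed formula $\mathcal S_{j_m}(\Xi^{(m)})=D_{j_m}S_{j_m}$ as a byproduct, which the paper's proof does not. The bookkeeping you flag (that $A_{+,j_m}\neq\emptyset$, that $\dim_{\FF_q}V=m$, and the $k=0$ cancellation requiring $m\ge 1$) is indeed routine, and your handling of the base case $m=0$ is fine.
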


\begin{proof}
We suppose by contradiction that $\mathcal S_{j_m}$ vanishes at $\Xi^{(m)}$. Combining with Lemma \ref{zeros} implies that for all $k \geq m$, $\mathcal S_{j_m},\dots,\mathcal S_{j_k}$ belong to $\mathcal L(j_k \infty -\Xi-\dots-\Xi^{(m)})$. We choose $k$ sufficiently large so that by the Riemann-Roch theorem, 
\begin{itemize}
\item $\dim_{\overline K} \mathcal L(j_k \infty )=j_k+1-g$,
\item $\dim_{\overline K} \mathcal L(j_k \infty - \Xi-\dots-\Xi^{(m)})=j_k-(m+1)+1-g$.
\end{itemize}
The first equation implies $j_k+1-g=k+1$. Putting this into the second equation yields 
	\[ \dim_{\overline K} \mathcal L(j_k \infty- \Xi-\dots-\Xi^{(m)})=j_k-(m+1)+1-g=k-m. \] 
By Lemma \ref{basis}, we know that $\mathcal S_{j_m},\dots,\mathcal S_{j_k}$ are linearly independent over $\overline K$. Thus we get a contradiction since 
\begin{align*}
k-m &=\dim_{\overline K} \mathcal L(j_k \infty - \Xi-\dots-\Xi^{(m)}) \\\
&\geq \dim_{\overline K} \text{Vect}(\mathcal S_{j_m},\dots,\mathcal S_{j_k}) \\
&=k-m+1. 
\end{align*}
We conclude that $\mathcal S_{j_m}$ does not vanish at $\Xi^{(m)}$.
\end{proof}

\subsection{A key identity}

We now prove the main result of this section.

\begin{theorem} \label{key identity}
For all $i \in  \mathbb Z^{\geq 0}$, we have
	\[ \mathcal S_i^{(1)}=\mathcal S_{\leq i} \cdot S_i(q-1). \]
\end{theorem}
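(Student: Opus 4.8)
The plan is to prove the identity $\mathcal{S}_i^{(1)}=\mathcal{S}_{\le i}\cdot S_i(q-1)$ by a direct computation in $A\otimes_{\FF_q}K$, working with the explicit expansions of all the quantities involved. First I would unwind the left-hand side: applying $\tau$ to $\mathcal{S}_i=\sum_{a\in A_{+,i}}\chi(a)/a$ and using that $\tau$ is $A\otimes 1$-linear (so it fixes the numerators $\chi(a)=a\otimes 1$) while raising the denominators to the $q$-th power, one gets $\mathcal{S}_i^{(1)}=\sum_{a\in A_{+,i}}\chi(a)/a^q=\sum_{a\in A_{+,i}}\chi(a)\cdot a^{-1}\cdot a^{-(q-1)}$. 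The factor $a^{-(q-1)}$ is what ought to recombine into $S_i(q-1)=\sum_{b\in A_{+,i}}b^{-(q-1)}$.

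The heart of the argument should be a partial-fraction / harmonic-product manipulation exactly analogous to the genus-zero identity $S_d^{(1)}$-type relations used by Thakur and in \cite{GP21}. Concretely, I would expand the product $\mathcal{S}_{\le i}\cdot S_i(q-1)=\Bigl(\sum_{j=0}^{i}\sum_{b\in A_{+,j}}\chi(b)/b\Bigr)\Bigl(\sum_{c\in A_{+,i}}c^{-(q-1)}\Bigr)$ and try to match it term-by-term against $\sum_{a\in A_{+,i}}\chi(a)/a^q$. The natural device is the recursion $e_{i+1}(z)=e_i(z)^q-D_i^{q-1}e_i(z)$ recalled before Lemma \ref{zeros}, together with the identity $\frac{A_{i0}}{1-\sum_k A_{ik}z^{q^k}}=-\sum_{a\in A_{+,i}}\frac{1}{z-a}$ and $S_i=A_{i0}\ne 0$ from \eqref{Si nonzero}–\eqref{Si(n)}. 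I expect that substituting $z\mapsto \chi(a)$ (i.e.\ evaluating these rational-function identities in the first tensor factor while the $a\in A_{+,i}$ runs in the second) converts the ratio $e_i/D_i$ into a generating series whose coefficients are the power sums $S_i(n)$, and that pairing this with $\mathcal{S}_i$ produces precisely the telescoping needed. In particular the relation $A_{i0}=\pm(D_0\cdots D_{i-1})^{q-1}/D_i$ should be what accounts for the passage from $a$ to $a^q$ in the denominator, since $\prod_{a\in A_{+,i}}a=D_i$ and the ``Frobenius twist'' on a product over $A_{+,i}$ introduces exactly such factors.

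An alternative, possibly cleaner route: rewrite $S_i(q-1)$ using $\sum_{c\in A_{+,i}}c^{q-1}=\mathcal{S}_{j_m}(\Xi^{(1)})$-type sums and exploit the fact that in characteristic $p$ one has the ``additivity'' $\bigl(\sum_{b\in A(<i)}(z-b)\bigr)$-style identities; then the claimed equality becomes an equality of two $\FF_q$-linear (in $z=\chi(\cdot)$) expressions that can be checked by comparing their divisors on $X_{\overline K}$, using the pole-order bookkeeping of Lemmas \ref{zeros}–\ref{nonvanishing} (note $\mathcal{S}_i^{(1)}$ has a pole of order $iq$ at $\infty$, while $\mathcal{S}_{\le i}\cdot S_i(q-1)$ has pole order $i+i(q-1)=iq$ as well, so the orders already match).

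The main obstacle I anticipate is the bookkeeping in the cross terms: when $j<i$ in the sum defining $\mathcal{S}_{\le i}$, the products $\chi(b)/b \cdot c^{-(q-1)}$ with $b\in A_{+,j}$, $c\in A_{+,i}$ do not obviously collapse, and one must show that all contributions from $j<i$ cancel (or reorganize) against the $j=i$ contribution after using the $e_i$-recursion — this is precisely the kind of nontrivial cancellation that, in the genus-zero case, is hidden in Thakur's power-sum identities and which here must be redone intrinsically on $X$. I would therefore spend most of the effort making the substitution $z\mapsto\chi(a)$ rigorous as an identity in $A\otimes_{\FF_q}A$ (resp.\ its relevant localization) and tracking the sign in $A_{i0}=\pm(\cdots)$ carefully, since getting the sign right is what pins down that the coefficient on the right is genuinely $S_i(q-1)$ and not $-S_i(q-1)$.
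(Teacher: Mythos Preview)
Your primary route---a direct term-by-term computation via partial fractions and the $e_i$-recursion---is not the paper's approach, and as stated it has a genuine gap. You correctly isolate the obstacle: after expanding $\mathcal{S}_{\le i}\cdot S_i(q-1)$ you must show that the cross terms with $j<i$ reorganize against the diagonal. But the only tools you invoke are the scalar identities \eqref{Si nonzero}--\eqref{Si(n)} and the recursion $e_{i+1}=e_i^q-D_i^{q-1}e_i$, all of which live entirely in $K$ and say nothing about the $A\otimes 1$ factor where $\chi(a)$ sits; substituting $z\mapsto\chi(a)$ into a one-variable identity over $K$ does not produce an identity in $A\otimes_{\FF_q}K$ of the required shape. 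In genus $0$ such cancellations can be forced by the explicit description of $A_{+,d}$, but for general $A$ the Weierstrass gaps make this kind of brute-force matching intractable, which is precisely why the paper avoids it.

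Your alternative route---compare the two sides as functions on $X_{\overline K}$---is much closer in spirit to the paper, but your bookkeeping is off: $\tau$ acts only on the coefficients in $1\otimes K$ and fixes $\chi(a)=a\otimes 1$, so $\mathcal{S}_i^{(1)}$ still has a pole of order $i$ (not $iq$) at $\infty$; likewise $S_i(q-1)\in K$ is a \emph{scalar}, contributing no pole, so $\mathcal{S}_{\le i}\cdot S_i(q-1)$ also has pole order $i$. Writing $i=j_m$, both sides therefore lie in $\mathcal{L}(j_m\infty)$, and this is where the paper begins. Rather than comparing divisors, the paper expands $\mathcal{S}_{j_m}^{(1)}/S_{j_m}(q-1)=\sum_{k=0}^m a_k\mathcal{S}_{j_k}$ in the basis of Lemma~\ref{basis} and determines the $a_k$ by \emph{evaluating} at $\Xi,\Xi^{(1)},\dots,\Xi^{(m-1)}$: by Lemma~\ref{zeros} the function $\mathcal{S}_{j_\ell}$ vanishes at $\Xi^{(k)}$ for $k<\ell$, so the resulting linear system is triangular, and an induction on $m$ together with Lemma~\ref{nonvanishing} (nonvanishing of the diagonal entries $\mathcal{S}_{j_k}|_{\Xi^{(k)}}$) forces $a_0=\cdots=a_{m-1}=1$. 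The last coefficient $a_m$ is then fixed by comparing $\widetilde{\sgn}$ on both sides. No harmonic-product manipulation enters at all.
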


\begin{proof}
Note that if $i \neq j_m$ for all $m$, then $A_{+,i}=\emptyset$. Thus $\mathcal S_i=S_i(q-1)=0$ and we are done. Therefore, it suffices to prove that for all $m \in  \mathbb Z^{\geq 0}$, 
\begin{equation} \label{identity1}
\mathcal S_{j_m}^{(1)}=\mathcal S_{\leq j_m} \cdot S_{j_m}(q-1). 
\end{equation}
We observe 
\begin{align*}
\mathcal S_{j_m}^{(1)} \Big|_\Xi=S_{j_m}(q-1)=S_{j_m}^{q-1}.
\end{align*}
Here the second equality holds by (\ref{Si(n)}). By \eqref{Si nonzero}, it follows that $\mathcal S_{j_m}^{(1)} \Big|_\Xi \neq 0$. Thus \eqref{identity1} is equivalent to 
\begin{equation} \label{identity2}
\frac{\mathcal S_{j_m}^{(1)}}{\mathcal S_{j_m}^{(1)} \Big|_\Xi}=\mathcal S_{\leq j_m}. 
\end{equation}

We now prove \eqref{identity2} by induction on $m$. For $m=0$, \eqref{identity2} holds since $\mathcal S_{\leq j_m}=\mathcal S_{j_m}^{(1)}=S_{j_m}(q-1)=1$. Suppose that for all $0 \leq k \leq m-1$,
	\[ \frac{\mathcal S_{j_k}^{(1)}}{\mathcal S_{j_k}^{(1)} \Big|_\Xi}=\mathcal S_{\leq j_k}. \]
We show that 
	\[ \frac{\mathcal S_{j_m}^{(1)}}{\mathcal S_{j_m}^{(1)} \Big|_\Xi}=\mathcal S_{\leq j_m}. \]

In fact, 	
\[ F:=\frac{\mathcal S_{j_m}^{(1)}}{\mathcal S_{j_m}^{(1)} \Big|_\Xi} \in \mathcal L(j_m \infty). \]
By Lemma \ref{basis}, the set $\mathcal S_{j_0},\dots,\mathcal S_{j_m}$ forms a basis of the $\overline K$-vector space $\mathcal L(j_m \infty)$. Therefore, there exist $a_0,\dots,a_m \in \overline K$ such that 
\begin{equation} \label{expression}
F= a_0 \mathcal S_{j_0}+\dots+a_m \mathcal S_{j_m}.
\end{equation}

We show by induction on $0 \leq k \leq m-1$ that $a_k=1$. For $k=0$, we evaluate \eqref{expression} at $\Xi$. The left-hand side gives $F |_\Xi=1$. By Lemma \ref{zeros}, the right-hand side returns
	\[ a_0 \mathcal S_{j_0} \Big|_\Xi+\dots+a_m \mathcal S_{j_m} \Big|_\Xi=a_0. \]
We then obtain $a_0=1$. 

Suppose that for all $j$ such that $0 \leq j <k$, we have $a_j=1$. We have to prove that $a_k=1$. We evaluate \eqref{expression} at $\Xi^{(k)}$. Note that $1 \leq k \leq m-1$. By Lemma \ref{zeros},
	\[ F |_{\Xi^{(k)}}=0. \]
We get
\begin{align*}
(a_0 \mathcal S_{j_0}+\dots+a_m \mathcal S_{j_m}) \Big|_{\Xi^{(k)}}&=a_0 \mathcal S_{j_0} \Big|_{\Xi^{(k)}}+\dots+a_m \mathcal S_{j_m} \Big|_{\Xi^{(k)}} \\ 
&=a_0 \mathcal S_{j_0} \Big|_{\Xi^{(k)}}+\dots+a_k \mathcal S_{j_k} \Big|_{\Xi^{(k)}} \\
&=\mathcal S_{j_0} \Big|_{\Xi^{(k)}}+\dots+\mathcal S_{j_k} \Big|_{\Xi^{(k)}}+(a_k-1) \mathcal S_{j_k} \Big|_{\Xi^{(k)}} \\
&=(\mathcal S_{j_0}+\dots+\mathcal S_{j_k}) \Big|_{\Xi^{(k)}}+(a_k-1) \mathcal S_{j_k} \Big|_{\Xi^{(k)}} \\
&=\mathcal S_{\leq j_k} \Big|_{\Xi^{(k)}}+(a_k-1) \mathcal S_{j_k} \Big|_{\Xi^{(k)}}.
\end{align*}
Here the second equality follows from Lemma \ref{zeros} again. The third equality holds as $a_0=\dots=a_{k-1}=1$.

Since $k \leq m-1$, by the induction hypothesis and Lemma \ref{zeros}, 
	\[ \mathcal S_{\leq j_k} \Big|_{\Xi^{(k)}}=\frac{\mathcal S_{j_k}^{(1)} \Big|_{\Xi^{(k)}}}{{\mathcal S_{j_k}^{(1)} \Big|_\Xi}}=0. \]
We then get
\begin{align*}
(a_0 \mathcal S_{j_0}+\dots+a_m \mathcal S_{j_m}) \Big|_{\Xi^{(k)}} =(a_k-1) \mathcal S_{j_k} \Big|_{\Xi^{(k)}}.
\end{align*}
Putting all together yields
	\[ 0=F |_{\Xi^{(k)}}=a_0 \mathcal S_{j_0} \Big|_{\Xi^{(k)}}+\dots+a_m \mathcal S_{j_m} \Big|_{\Xi^{(k)}} =(a_k-1) \mathcal S_{j_k} \Big|_{\Xi^{(k)}}. \] 
By Lemma \ref{nonvanishing}, $\mathcal S_{j_k} \Big|_{\Xi^{(k)}} \neq 0$. We deduce that $a_k=1$ as desired.

To finish the proof, we show that $a_m=1$. We look at the sign of both sides of \eqref{expression}. The left-hand side gives
\begin{align*}
\widetilde \sgn(F)=\frac{\widetilde \sgn(\mathcal S_{j_m}^{(1)})}{\mathcal S_{j_m}^{(1)} \Big|_\Xi}=\frac{S_{j_m}^q}{S_{j_m}(q-1)}=S_{j_m}.
\end{align*}
Here the last equality follows from Lemma \ref{Si(n)}. The right-hand side equals $a_m S_{j_m}$. Thus $S_{j_m}=a_m S_{j_m}$. As $S_{j_m} \neq 0$, we deduce $a_m=1$ and the proof is complete.  
\end{proof}

As a direct consequence, since $Z_0=\lim_{n\rightarrow\infty}\mathcal{S}_{\leq n}\in\TT(K)$, we deduce that 
the entire function $Z_0$ vanishes at the points $\Xi^{(1)},\Xi^{(2)},\ldots$ and $Z_0\big|_\Xi=1$. The genus one case of Theorem \ref{key identity} can also be deduced from the explicit arguments of Green and Papanikolas in \cite{GP18}.


\section{Universal families of Eulerian multiple zeta values}


\subsection{Sign-normalized rank one Drinfeld modules}
\label{DH-modules}
If $R$ is an $\FF_q$-algebra we denote by $R[\tau]$ the skew polynomial ring in $\tau$ (denoted by $R\{\tau\}$ in \cite[Chapter 1]{Gos96}), and by $R[[\tau]]$ the `ring of Frobenius power series' (denoted by $R\{\{\tau\}\}$ in \cite[\S 4.6]{Gos96}.)
Let $\phi: A\rightarrow \overline{K}_\infty[\tau]$ be a rank one Drinfeld module such that for all $a\in A\setminus\{0\}$,
	$$\phi_a=a+\cdots + \sgn(a) \tau^{\deg a}.$$
Such a Drinfeld module $\phi$ is said to be \textit{sign-normalized}. In \cite{GP18} a sign-normalized rank one Drinfeld module is called a Drinfeld-Hayes module. By \cite[Theorem 7.2.15]{Gos96}, there always exist sign-normalized rank one Drinfeld modules. Further, if we denote by $H$ the Hilbert class field of $A$ and $O_H$ the integral closure of $A$ in $H$, then for all $a \in A$, $\phi_a \in O_H[\tau]$. 

There exist unique elements $\exp_\phi, \log_\phi \in H[[\tau]]$ such that
 $$\exp_\phi,\log_\phi \in 1+ H[[\tau]]\tau,$$
 $$\forall a\in A, \quad \exp_\phi a= \phi_a\exp_\phi,$$
 $$\exp_\phi \log_\phi=\log_\phi \exp_\phi =1.$$
We write
\begin{align*}
\exp_\phi &=\sum_{i \geq 0} \epsilon_i \tau^i, \quad \epsilon_i \in H, \\
\log_\phi &=\sum_{i \geq 0} \lambda_i \tau^i, \quad \lambda_i \in H.
\end{align*} 
The formal series $\exp_\phi,$ and $\log_\phi$ are respectively called \textit{the exponential series} and \textit{the logarithm series} associated to $\phi$. Additionally, $\exp_\phi$ defines, in a unique way, 
an $\FF_q$-linear entire function $\CC_\infty\rightarrow\CC_\infty$ (unlike the functions $Z_k$ which are
entire $X(\CC_\infty)\setminus\{\infty\}\rightarrow\CC_\infty$). 
In what follows, we identify the operator $\exp_\phi$ with this entire function $\CC_\infty\rightarrow\CC_\infty$ hence tolerating an abuse of notation.
The kernel of $\exp_\phi$ is a projective $A$-module of rank one embedded discretely in $\CC_\infty$.
By \cite[Theorem 5.8]{Tha93}, the coefficients $\lambda_i$ are non-zero for all $i\geq 0$. Theorem 3.2 of \cite{Tha93} describes non-vanishing criteria for the coefficients $\epsilon_i$'s at a superior level of generality, i.e. with $\infty$ a closed point of $X$ not necessarily $\FF_q$-rational. Note that the condition $d_\infty=1$ ensures that they are all non-zero.

Drinfeld proved that there is a bijection between the set of sign-normalized rank one Drinfeld module $\phi$ and a set of certain functions $f$ called {\it shtuka functions} in the rational  function field $F:=\operatorname{Frac}(A \otimes_{\FF_q} \overline{K})$ of $X_{\overline{K}}$ (see \cite{Dri74,Dri77,Tha93}). It follows that one can associate a shtuka function $f_\phi\in F^\times$  to any sign-normalized Drinfeld module of rank one $\phi$. The divisor $(f_\phi)$ associated to $f_\phi$ can be expressed in the following way:
	\[ (f_\phi)=V_\phi^{(1)}-V_\phi+\Xi-\infty, \]
for some effective divisor $V_\phi$ of $X(\overline{K})$ of degree $g$. We call it {\it the Drinfeld divisor} attached to $\phi$. Note that the points $\Xi$ and $\infty$  do not belong to the support of $V_\phi$ (see \cite[Corollary 0.3.3]{Tha93}). 

There exists a unique sign-normalized Drinfeld module of rank one $\phi$, called the {\em standard} sign-normalized Drinfeld module, such that the kernel of $\exp_\phi$ is a free $A$-module of rank one. From now on, we only work with this standard Drinfeld module $\phi$. Then,
	\[ \operatorname{Ker}(\exp_\phi)=A \widetilde \pi_\phi \] 
for some element $\widetilde \pi_\phi \in \bC_\infty^\times$, uniquely defined up to multiplication by a factor in $\FF_q^\times$. We also denote by $f$ and $V$, respectively, the associated shtuka function and the associated Drinfeld divisor.

We now come back to the multiple zeta values of Thakur, in $K_\infty$. We choose an $r$-tuple $\mathfrak{s}=(s_1,\ldots,s_r)\in \mathbb N^r$, with $r>0$. We recall from the introduction the following definition (see the first paragraph concerning the higher genus cases).

\begin{definition}\label{def-eulerian}
The multiple zeta value $\zeta_A(\frak s)$ is {\it Eulerian} if $\zeta_A(\frak s)/\widetilde \pi_{\phi}^{w(\frak s)}\in\CC_\infty$ is algebraic over $K$.
\end{definition}

\begin{remark}
We consider the genus $0$ case, i.e., $X=\PP^1$ with $\infty$ an $\FF_q$-rational point. Then as already noticed, $A=\Fq[\theta]$ and $K=\Fq(\theta)$ for some rational function $\theta$ over $\mathbb P^1$. In this case we have $\phi=C$ the {\em Carlitz module}. It is uniquely defined by the identity $C_\theta=\theta+\tau$ in $K[\tau]$. We write $\widetilde{\pi}=\widetilde{\pi}_C$ and we have already mentioned that  if $q-1\mid n$ with $n \geq 1$, $\zeta_A(n)$ is Eulerian, see \cite{Gos96,Tha04} for more details.  By using a positive characteristic variant of Baker's theory and a seminal result of Anderson and Thakur \cite{AT90}, Yu proved the following result for $n>0$: $\zeta_A(n)/\widetilde \pi^{n} \in \overline K$ if and only if $\zeta_A(n)/\widetilde \pi^{n} \in K$, and of course the latter property occurs if $q-1\mid n$ (see \cite[Corollary 2.6]{Yu91}). This result was generalized for multiple zeta values associated to $\FF_q[\theta]$, of arbitrary depth, by Chang \cite{Cha14}. Chang and Yu explicitly computed in \cite{CY07,Yu97} the transcendence degree of the subfield of
$\CC_\infty$ generated by $K$ and the zeta values $\zeta_A(n)$, $n\geq 1$. The methods of \cite{Cha14, CY07} do not use Baker's theory but rest on the transcendence methods of Anderson, Brownawell and Papanikolas \cite{ABP04}, and Papanikolas \cite{Pap08}. 
\end{remark}

Recall that $\exp_\phi =\sum_{i \geq 0} \epsilon_i \tau^i$ with $\epsilon_i \in H$. Let us consider
	$$\exp_A(z):=z \prod_{a\in A \setminus \{0\}}\left(1-\frac{z}{a}\right),\quad z\in\CC_\infty.$$
This product converges for all $z\in\CC_\infty$ and defines an entire $\FF_q$-linear 
function $\CC_\infty\rightarrow\CC_\infty$ with kernel $A\subset\CC_\infty$ that we identify with 
an operator $$\exp_A=\sum_{k\geq 0}e_i\tau^i\in K_\infty[[\tau]].$$
The link between the operators $\exp_A$ and $\exp_\phi$ is given by the following identity in $\CC_\infty[[\tau]]$:
\begin{equation}\label{expA-phi}
\exp_A=\widetilde{\pi}_\phi^{-1} \exp_\phi\widetilde{\pi}_\phi.
\end{equation}
We also have the identity of meromorphic functions over $\CC_\infty$ and of formal series of $K_\infty[[z]]$:

$$\frac{z}{\exp_A(z)}=1+\sum_{\begin{smallmatrix} n\geq 1\\ (q-1)\mid n\end{smallmatrix}}\zeta_A(n)z^n\in K_\infty[[z]].$$
From it we get
\begin{equation}\label{zeta}
\zeta_A(q^k-1)=\lambda_k\widetilde{\pi}_\phi^{q^k-1},\quad k\geq 0.
\end{equation}
Note that this implies $\lambda_k\neq0$ for all $k\geq0$, which is Theorem 5.8 of \cite{Tha93} (and essentially the same proof). Further, we deduce that for all $n \geq 1$ with $n \equiv 0 \pmod{q-1}$, $\zeta_A(n)$ is Eulerian (see \cite[Lemma 8.18.1]{Gos96}).

\subsection{Carlitz-Hayes polynomials}

We know that for all $n \geq 1$ there exists a unique polynomial called {\it the $n$-th Carlitz-Hayes polynomial}
\begin{equation*} 
	\widetilde{\mathcal E}_n\in K[\tau]
\end{equation*}
such that, for all $z \in \bC_\infty$,
\begin{equation*} 
\widetilde{\mathcal E}_n(z)=z \prod_{a \in A(<n)\setminus \{0\}} \left( 1-\frac{z}{a}\right).
\end{equation*}
Here $\widetilde{\mathcal{E}}_n(z)$ denotes the evaluation of $\widetilde{\mathcal{E}}_n$ at $z$.
Let $\kappa_{n,m}$ be the coefficient of $\tau^m$ in $\mathcal{E}_n$ seen as a linear combination 
of $1,\tau,\tau^2,\ldots$
\begin{proposition}
With the above notation, we have
	\[ \kappa_{n,m}=(-1)^m S_{<n}(q-1,q(q-1),\dots,q^{m-1}(q-1))\]
\end{proposition}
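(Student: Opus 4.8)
The plan is to compute the coefficients of $\widetilde{\mathcal E}_n$ recursively in $n$, exploiting the factorization $\widetilde{\mathcal E}_{n+1}(z)=\widetilde{\mathcal E}_n(z)\prod_{a\in A_{+,n}}\bigl(1-z/a\bigr)$ that comes from partitioning $A(<n+1)\setminus\{0\}$ into $A(<n)\setminus\{0\}$ together with the monic elements of degree $n$ (the sign-normalization makes every nonzero $a$ a scalar multiple of a unique monic element, and the scalars cancel in the product over all nonzero elements of a fixed degree). Concretely, set $\widetilde{\mathcal E}_n=\sum_m \kappa_{n,m}\tau^m$ as an element of $K[\tau]$ (so as an $\mathbb F_q$-linear polynomial, $\widetilde{\mathcal E}_n(z)=\sum_m \kappa_{n,m}z^{q^m}$). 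The right factor, $\prod_{a\in A_{+,n}}(1-z/a)$, is up to normalization the polynomial $e_n(z)/$(leading stuff) studied in \S2.1 via Thakur; its $\mathbb F_q$-linearity and its coefficients are controlled by the identity recalled from \cite[Eq.~(18)]{Tha92}, namely $\sum_{n\ge 0}S_n(m+1)z^m$ is the relevant logarithmic-derivative expansion of $\prod_{a\in A_{+,n}}(1-z/a)^{-1}$.

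The key computation is then: comparing the $\tau$-coefficient of $\widetilde{\mathcal E}_{n+1}=\widetilde{\mathcal E}_n\cdot\bigl(\text{right factor}\bigr)$ and using that the linear (i.e. $\tau^0$) coefficient of the right factor is $1$ while its $\tau^1$-coefficient is $-S_n(q-1)$ (this is where \eqref{Si(n)}, $S_n(q-1)=S_n^{q-1}$, and the description of $e_n(z)/D_n$ enter), one gets a recursion of shape
\[
\kappa_{n+1,m}=\kappa_{n,m}^{(\text{correction})}-\kappa_{n,m-1}^{(q)}\,S_n(q-1)+\cdots,
\]
where the higher terms involve $S_n(q^j-1)$ for $j\ge 2$. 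The cleanest route is probably to first prove the degenerate sub-case $m=1$, which reads $\kappa_{n,1}=-S_{<n}(q-1)$ and follows immediately by induction from the recursion $\kappa_{n+1,1}=\kappa_{n,1}-S_n(q-1)$ together with $S_{<n+1}(q-1)=S_{<n}(q-1)+S_n(q-1)$; then bootstrap to general $m$ by induction on $m$, feeding in the definition $S_{<n}(s_1,\dots,s_m)=\sum_{i<n}S_i(s_1)S_{<i}(s_2,\dots,s_m)$ and matching it against the convolution produced by multiplying $\widetilde{\mathcal E}_n$ by the degree-$n$ factor.

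The main obstacle I expect is bookkeeping the higher Frobenius twists. When one writes out $\widetilde{\mathcal E}_{n+1}=\widetilde{\mathcal E}_n\cdot(\text{right factor})$ in $K[\tau]$, the noncommutativity means the right factor's $\tau^j$-coefficient gets $q^{(\deg)}$-twisted before it meets $\widetilde{\mathcal E}_n$'s coefficients, and a priori the right factor has many nonzero coefficients beyond $\tau^0,\tau^1$. The point that rescues the argument is that in the \emph{telescoping} sum over $n$ only the $\tau^0$ and $\tau^1$ contributions survive to produce the clean multiple-zeta pattern $(q-1,q(q-1),\dots,q^{m-1}(q-1))$ — heuristically because each step of the recursion multiplies the "previous depth" string by exactly one new slot of the form $q^{m-1}(q-1)=\deg$-weight, matching $S_n(q^m - q^{m-1})$ against the $q$-power twisting. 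So the real work is verifying that all the would-be extra terms either cancel in the telescoping or reassemble into the nested $S_{<n}(\cdots)$; once the $m=1$ base case and the shape of the recursion are in hand, the induction on $m$ is then a matching of two explicitly-indexed sums, and I would present it as such rather than grinding every index. An alternative, possibly slicker, is to avoid the recursion entirely and instead pass to the generating function in a new variable $\tau$: show that $\sum_m \kappa_{n,m}\tau^m$ satisfies the functional equation forced by $\widetilde{\mathcal E}_{n+1}=\widetilde{\mathcal E}_n\circ(\cdots)$ and recognize the claimed formula as its unique solution — this is morally the finite-level shadow of Theorem~\ref{Theorem 1}'s factorization $\widetilde\pi_\phi^{-1}\exp_\phi\widetilde\pi_\phi=\sum_k(-1)^k\zeta_A(q-1,\dots,(q-1)q^{k-1})\tau^k$, and indeed one could deduce the Proposition as the $n\to\infty$ statement read backwards, though for a self-contained proof the induction is cleaner.
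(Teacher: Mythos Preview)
There is a genuine gap. Your factorization $\widetilde{\mathcal E}_{n+1}(z)=\widetilde{\mathcal E}_n(z)\prod_{\deg a = n}(1-z/a)$ is a factorization of \emph{ordinary} polynomials in $z$. The right-hand factor (equal to $\prod_{a\in A_{+,n}}(1-(z/a)^{q-1})$ after collecting $\FF_q^\times$-orbits) is a polynomial in $z^{q-1}$ with constant term $1$; it is \emph{not} $\FF_q$-linear, hence does not live in $K[\tau]$, and it makes no sense to speak of its ``$\tau^j$-coefficients'' or of a recursion in $K[\tau]$ coming from this product. The recursion you write, $\kappa_{n+1,m}=\kappa_{n,m}-\kappa_{n,m-1}^{q}\,S_n(q-1)+\cdots$, with Frobenius twists on the $\kappa$'s, is the shape one gets from a \emph{left} factorization $\widetilde{\mathcal E}_{n+1}=L_n\cdot\widetilde{\mathcal E}_n$ in $K[\tau]$, not from pointwise multiplication by a non-additive factor on the right. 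Your ``alternative'' route via $\widetilde{\mathcal E}_{n+1}=\widetilde{\mathcal E}_n\circ(\cdots)$ has the composition on the wrong side for the same reason. The worry you voice about ``many nonzero coefficients beyond $\tau^0,\tau^1$'' and unexplained telescoping cancellations is a symptom of this conflation of the two products; it is not something that can be patched by bookkeeping.

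What actually works, and what the paper does, is to factor on the \emph{left} in $K[\tau]$. Since $A(<n-1)\subset A(<n)$ is an inclusion of $\FF_q$-vector spaces of codimension $\le 1$, one has $\widetilde{\mathcal E}_{n}=(1-\alpha_{n-1}\tau)\,\widetilde{\mathcal E}_{n-1}$ in $K[\tau]$ for some $\alpha_{n-1}\in K$ --- a \emph{single} linear factor, with no higher terms to control. Iterating gives $\widetilde{\mathcal E}_n=(1-\alpha_{n-1}\tau)\cdots(1-\alpha_0\tau)$. Reading off the $\tau^1$-coefficient two ways (once from this product, once from $\widetilde{\mathcal E}_n(z)=z\prod_{a\in A_{+},\,\deg a<n}(1-(z/a)^{q-1})$, which is the one step of your plan that \emph{does} go through) yields $\alpha_i=S_i(q-1)$. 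Expanding the noncommutative product then gives
\[
\kappa_{n,m}=(-1)^m\sum_{n>i_1>\cdots>i_m\ge 0}S_{i_1}(q-1)\,S_{i_2}(q-1)^{q}\cdots S_{i_m}(q-1)^{q^{m-1}},
\]
which is exactly $(-1)^m S_{<n}(q-1,q(q-1),\ldots,q^{m-1}(q-1))$ by definition. No induction on $m$ and no mysterious cancellations are needed once the factorization is set up on the correct side.
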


\begin{proof}
Since the zeros of $\widetilde{\mathcal E}_n$ are exactly the elements of the $\FF_q$-vector space $A(<n)$ with multiplicity $1$, we can write
	\[ \widetilde{\mathcal E}_n=(1-\alpha_{n-1} \tau) \widetilde{\mathcal E}_{n-1} \]
for some $\alpha_{n-1} \in K$. Thus we obtain a non-commutative factorization 
	\[ \widetilde{\mathcal E}_n=(1-\alpha_{n-1} \tau) \cdots (1-\alpha_1 \tau)(1-\alpha_0\tau) \]
with $\alpha_0=1$. It follows that
\begin{equation} \label{kappa1}
\kappa_{n,1}=-(\alpha_0+\alpha_1+\cdots+\alpha_{n-1}).
\end{equation}

Now, by considering the zeros, degree and the constant term, we see that, for any $a\in\CC_\infty^\times$,
	\[ \prod_{c \in \Fq^\times} \left(1-\frac{z}{ca}\right)=1-\frac{z^{q-1}}{a^{q-1}}. \]
Thus
\begin{equation*} 
\widetilde{\mathcal E}_n(z)=z \prod_{\substack{a \in A(<n)\setminus \{0\}, \\ a \text{ monic}}} \left( 1-\frac{z^{q-1}}{a^{q-1}}\right).
\end{equation*}
We deduce
\begin{equation} \label{kappa2}
\kappa_{n,1}=- \sum_{\substack{a \in A(<n)\setminus \{0\}, \\ a \text{ monic}}} \frac{1}{a^{q-1}}. 
\end{equation}
Combining \eqref{kappa1} and \eqref{kappa2} yields, inductively,
	\[ \alpha_{n-1}=\sum_{a \in A_{+,n-1}} \frac{1}{a^{q-1}}=S_{n-1}(q-1). \]
We then get an explicit factorization
	\[ \widetilde{\mathcal E}_n=\Big(1-S_{n-1}(q-1) \tau\Big) \dots \Big(1-S_1(q-1) \tau\Big). \]
The Proposition follows.
\end{proof}
Note that the multiple zeta value $\zeta_A\left(q-1,\ldots,(q-1)q^{k-1}\right)$ has depth $k$. If $k=0$,  we have set
$\zeta_A\left(q-1,\ldots,(q-1)q^{k-1}\right)=\zeta_A(\emptyset)=1$.

\begin{theorem} \label{Zagier formula}
We have the following identity
\begin{equation} \label{expA}
\exp_A=\sum_{k\geq 0}(-1)^k\zeta_A\left(q-1,\ldots,(q-1)q^{k-1}\right)\tau^k\in K_\infty[[\tau]].
\end{equation}
In particular, for all $k \in \mathbb N$, 
	\begin{equation}
	\label{epsilon-n}
	\zeta_A(q-1,q(q-1),\dots,q^{k-1}(q-1))=(-1)^k \epsilon_k \widetilde \pi_\phi^{q^k-1}. \end{equation}
\end{theorem}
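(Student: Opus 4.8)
The plan is to obtain \eqref{expA} by passing to the limit $n\to\infty$ in the explicit factorization of the $n$-th Carlitz--Hayes polynomial $\widetilde{\mathcal E}_n$ proved above, and then to deduce \eqref{epsilon-n} by conjugating \eqref{expA} with $\widetilde\pi_\phi$ through the identity \eqref{expA-phi}.

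First I would record, from the Proposition above, that if $\widetilde{\mathcal E}_n=\sum_{m\geq 0}\kappa_{n,m}\tau^m\in K[\tau]$ then $\kappa_{n,0}=1$ and $\kappa_{n,m}=(-1)^m S_{<n}\big(q-1,q(q-1),\dots,q^{m-1}(q-1)\big)$ for $m\geq 1$, while $\widetilde{\mathcal E}_n$ has $\tau$-degree $\dim_{\FF_q}A(<n)$, which goes to $\infty$ with $n$. The crux of the first half is to check that $(\widetilde{\mathcal E}_n)_n$ converges to $\exp_A$ coefficient by coefficient in $K_\infty[[\tau]]$, i.e. that the coefficient $e_m$ of $\tau^m$ in $\exp_A$ is $\lim_{n\to\infty}\kappa_{n,m}$. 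Since $A(<n)$ is an $\FF_q$-subspace of the $\FF_q$-lattice $A$, I would use the operator factorization $\exp_A=\exp_{\Lambda_n}\cdot\widetilde{\mathcal E}_n$ in $K_\infty[[\tau]]$, where $\Lambda_n:=\widetilde{\mathcal E}_n(A)\subset K_\infty$ is the image lattice and $\exp_{\Lambda_n}(z)=z\prod_{\lambda\in\Lambda_n\setminus\{0\}}(1-z/\lambda)$; both sides are $\FF_q$-linear entire functions with kernel $A$ and with coefficient $1$ at $\tau^0$, hence equal. As $\widetilde{\mathcal E}_n$ strictly raises $\infty$-adic degrees, the quantity $N_n:=\min\{\deg\lambda:\lambda\in\Lambda_n\setminus\{0\}\}$ tends to $\infty$, and for any $\FF_q$-lattice the $\tau^i$-coefficient of $\exp_{\Lambda_n}$ has $v_\infty\geq (q^i-1)N_n$, so $\exp_{\Lambda_n}\to 1$ coefficientwise and therefore $\widetilde{\mathcal E}_n=\exp_{\Lambda_n}^{-1}\exp_A\to\exp_A$. (Alternatively, one may argue that $\widetilde{\mathcal E}_n\to\exp_A$ uniformly on every disc of $\CC_\infty$ because the product defining $\exp_A$ converges, and then invoke the maximum principle to pass from uniform convergence of $\FF_q$-linear entire functions to convergence of each Frobenius coefficient.)

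Granting this, I would finish \eqref{expA} by invoking the definition \eqref{def-general-zeta} of Thakur's multiple zeta values: for each fixed $m\geq 1$ the partial sums $S_{<n}\big(q-1,q(q-1),\dots,q^{m-1}(q-1)\big)$ converge in $K_\infty$ to $\zeta_A\big(q-1,q(q-1),\dots,q^{m-1}(q-1)\big)$, while for $m=0$ both sides equal $\zeta_A(\emptyset)=1$; hence $e_m=(-1)^m\zeta_A\big(q-1,\dots,q^{m-1}(q-1)\big)$ for all $m\geq 0$, which is exactly \eqref{expA}. For \eqref{epsilon-n} I would substitute $\exp_\phi=\sum_{i\geq 0}\epsilon_i\tau^i$ into \eqref{expA-phi} and use $\tau^i\,\widetilde\pi_\phi=\widetilde\pi_\phi^{q^i}\tau^i$ in $\CC_\infty[[\tau]]$ to rewrite
\[ \exp_A=\widetilde\pi_\phi^{-1}\exp_\phi\,\widetilde\pi_\phi=\sum_{i\geq 0}\epsilon_i\,\widetilde\pi_\phi^{\,q^i-1}\tau^i, \]
so $e_i=\epsilon_i\widetilde\pi_\phi^{\,q^i-1}$; comparing with \eqref{expA} gives $(-1)^k\zeta_A\big(q-1,q(q-1),\dots,q^{k-1}(q-1)\big)=\epsilon_k\widetilde\pi_\phi^{\,q^k-1}$, which is \eqref{epsilon-n}, and since $\epsilon_k\in H$ is algebraic over $K$ the corresponding multiple zeta value is Eulerian.

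The step I expect to be the real obstacle is the coefficientwise convergence $\widetilde{\mathcal E}_n\to\exp_A$: one must be careful that convergence of the truncated products \emph{as functions} really entails convergence of each separate Frobenius coefficient, given that $\exp_A$ is a genuine power series and the $\tau$-degrees of the $\widetilde{\mathcal E}_n$ grow without bound. The factorization $\exp_A=\exp_{\Lambda_n}\widetilde{\mathcal E}_n$ reduces this to the elementary estimate $N_n\to\infty$, which in turn comes from the closed formula $\deg\widetilde{\mathcal E}_n(a)=q^{\dim_{\FF_q}A(<n)}\deg a-\sum_{b\in A(<n)\setminus\{0\}}\deg b$ valid for $\deg a\geq n$; the remainder of the argument is formal.
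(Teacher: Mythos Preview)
Your proposal is correct and follows essentially the same route as the paper: pass to the limit in the Proposition's formula $\kappa_{n,m}=(-1)^mS_{<n}(q-1,\ldots,q^{m-1}(q-1))$ to obtain \eqref{expA}, then conjugate by $\widetilde\pi_\phi$ via \eqref{expA-phi} to get \eqref{epsilon-n}. The paper's own proof is a two-line sketch that simply asserts uniform convergence of $\widetilde{\mathcal E}_n(z)$ to $\exp_A$ on compact subsets of $\CC_\infty$; your alternative argument is precisely this, while your primary argument via the factorization $\exp_A=\exp_{\Lambda_n}\widetilde{\mathcal E}_n$ and the estimate $N_n\to\infty$ is a welcome addition that makes the passage from functional convergence to coefficientwise convergence in $K_\infty[[\tau]]$ genuinely rigorous rather than implicit.
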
 

\begin{proof}
We observe that $\widetilde{\mathcal E}_n(z)$ converges uniformly on every compact subset of $\bC_\infty$ to the function $\exp_A \in K_\infty[[\tau]]$. By the previous discussion, \eqref{expA} follows immediately and, viewing (\ref{expA-phi}), it suffices to justify (\ref{epsilon-n}).
\end{proof}

As a direct consequence of Theorem \ref{Zagier formula} we deduce that the multiple zeta values $\zeta_A(q-1,q(q-1),\dots,q^{k-1}(q-1))$ are all Eulerian in the sense of Definition \ref{def-eulerian}.

Note that the special case of $A=\FF_q[\theta]$ is considered in \cite[Proposition 4.4.9]{Pel21}. Unlike 
(\ref{zeta}), (\ref{epsilon-n}) does not seem to deliver non-vanishing properties of the coefficients $\epsilon_k$.
We can however consider the reverse implication and deduce that the multiple zeta values $\zeta_A(q-1,q(q-1),\dots,q^{k-1}(q-1))$ are non-zero by using \cite[Theorem 3.2]{Tha93}.
 
\subsection{Harmonic products}

We are ready to prove our main result, by variants of the harmonic products discussed in \cite{GP21}.
\begin{proposition}\label{prop-from-Z0-to-Zk}
For all $k \in \mathbb N$ we have the following identity of entire functions over $\operatorname{Spec}(A)^{an}_{\CC_\infty}$:
\begin{align*}
Z_0 \cdot \zeta_A(q-1,\dots,q^{k-1}(q-1)) = Z_k + Z_{k-1}^{(1)}.
\end{align*}
\end{proposition}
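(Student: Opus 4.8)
The plan is to expand the left-hand side $Z_0 \cdot \zeta_A(q-1,\dots,q^{k-1}(q-1))$ as a product of two convergent series in $\TT(K)$ and to reorganize it using Theorem~\ref{key identity}. Recall that $Z_0 = \sum_{d\geq 0}\mathcal{S}_d S_{<d}(\emptyset) = \sum_{d\geq 0}\mathcal{S}_d$ (with the depth-zero factor equal to $1$) and that $\zeta_A(q-1,\dots,q^{k-1}(q-1)) = \lim_{e\to\infty} S_{<e}(q-1,q(q-1),\dots,q^{k-1}(q-1))$. Writing $\mathfrak t := (q-1,q(q-1),\dots,q^{k-1}(q-1))$ for brevity, the product becomes $\sum_{d\geq 0}\mathcal{S}_d \cdot \zeta_A(\mathfrak t)$, and the idea is to split $\zeta_A(\mathfrak t) = S_{<d}(\mathfrak t) + (\text{tail from degree} \geq d)$ so as to produce the two harmonic-type contributions on the right.

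The key step is to feed in Theorem~\ref{key identity}, which gives $\mathcal{S}_d^{(1)} = \mathcal{S}_{\leq d}\cdot S_d(q-1)$ for all $d\geq 0$. First I would record the shift identity $Z_k = \sum_{d\geq 0}\mathcal{S}_d S_{<d}(\mathfrak t)$ and then compute $Z_{k-1}^{(1)} = \sum_{d\geq 0}\mathcal{S}_d^{(1)} S_{<d}(\mathfrak t')^{(1)}$ where $\mathfrak t' := (q-1,\dots,q^{k-2}(q-1))$. Now $S_{<d}(\mathfrak t')^{(1)}$ relates to the sums appearing inside $S_{<d}(\mathfrak t)$ via the Frobenius twist (each $q^j(q-1)$ in $\mathfrak t$ becomes $q^{j+1}(q-1)$ after one twist, matching the shift of indices from $\mathfrak t'$ to the tail of $\mathfrak t$), and substituting $\mathcal{S}_d^{(1)} = \mathcal{S}_{\leq d}\, S_d(q-1)$ produces $\sum_{d\geq 0}\mathcal{S}_{\leq d}\, S_d(q-1)\,S_{<d}(\mathfrak t')^{(1)}$. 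The term $S_d(q-1)\,S_{<d}(q(q-1),\dots,q^{k-1}(q-1))$ is, by definition of the iterated power sums $S_d(s_1,\dots,s_r) = S_d(s_1)S_{<d}(s_2,\dots,s_r)$, exactly $S_d(\mathfrak t)$. One then has to combine $\mathcal{S}_{\leq d} = \mathcal{S}_{<d} + \mathcal{S}_d$ and reindex; the contribution $\sum_d \mathcal{S}_d S_d(\mathfrak t)$ together with $\sum_d \mathcal{S}_d S_{<d}(\mathfrak t) = Z_k$ telescopes against $\sum_d \mathcal{S}_d \zeta_A(\mathfrak t)$ because $S_{<d}(\mathfrak t) + \sum_{e\geq d}S_e(\mathfrak t) = \zeta_A(\mathfrak t)$, while the leftover $\sum_d \mathcal{S}_{<d}S_d(\mathfrak t)$ reassembles into $Z_{k-1}^{(1)}$ after peeling off one Frobenius twist in the reverse direction.

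Concretely, I would organize the computation as a double sum $\sum_{d\geq 0}\mathcal{S}_d\big(\sum_{e\geq 0}S_e(\mathfrak t)\big)$, split according to whether $e < d$, $e = d$, or $e > d$, and identify: the $e<d$ block gives $\sum_d\mathcal{S}_d S_{<d}(\mathfrak t) = Z_k$; the $e\geq d$ block gives $\sum_d \mathcal{S}_d \sum_{e\geq d}S_e(\mathfrak t) = \sum_{e\geq 0}\big(\sum_{d\leq e}\mathcal{S}_d\big)S_e(\mathfrak t) = \sum_{e\geq 0}\mathcal{S}_{\leq e}\,S_e(\mathfrak t)$; and then, using $S_e(\mathfrak t) = S_e(q-1)S_{<e}(q(q-1),\dots,q^{k-1}(q-1)) = S_e(q-1)\,S_{<e}(\mathfrak t')^{(1)}$ together with Theorem~\ref{key identity} in the form $\mathcal{S}_{\leq e}S_e(q-1) = \mathcal{S}_e^{(1)}$, this block equals $\sum_{e\geq 0}\mathcal{S}_e^{(1)}S_{<e}(\mathfrak t')^{(1)} = \big(\sum_{e\geq 0}\mathcal{S}_e S_{<e}(\mathfrak t')\big)^{(1)} = Z_{k-1}^{(1)}$. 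Adding the two blocks gives the claimed identity, valid in $\TT(K)$ and hence as entire functions by Lemma~\ref{lemma-entire}. The main obstacle is bookkeeping: one must check carefully that the Frobenius twist $(\cdot)^{(1)}$ applied to $S_{<e}(\mathfrak t')$ really does produce $S_{<e}(q(q-1),\dots,q^{k-1}(q-1))$ — i.e.\ that twisting the iterated power sums commutes with the index shift in $\mathfrak t$ — and that all rearrangements of the double series are legitimate, which follows from convergence in $\TT(K)$ (each factor $S_e(\mathfrak t)\to 0$ and $\mathcal{S}_e\to 0$ in the Gauss norm, as in the proof of Lemma~\ref{lemma-entire}). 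For $k=1$ one should separately verify the boundary case, where $\mathfrak t' = \emptyset$, $Z_{k-1}^{(1)} = Z_0^{(1)}$, and the identity reads $Z_0\cdot\zeta_A(q-1) = Z_1 + Z_0^{(1)}$.
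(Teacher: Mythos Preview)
Your proposal is correct and follows essentially the same route as the paper: the ``Concretely'' paragraph is exactly the paper's argument---split the double sum $\sum_d \mathcal{S}_d \sum_e S_e(\mathfrak t)$ into the blocks $e<d$ (giving $Z_k$) and $e\geq d$, swap the latter to get $\sum_e \mathcal{S}_{\leq e} S_e(\mathfrak t)$, then apply Theorem~\ref{key identity} and the Frobenius relation $S_{<e}(q(q-1),\dots,q^{k-1}(q-1)) = S_{<e}(\mathfrak t')^q$ to recognize $Z_{k-1}^{(1)}$. The exploratory second paragraph is unnecessary and slightly muddled (the ``leftover $\sum_d \mathcal{S}_{<d}S_d(\mathfrak t)$'' line is not how the argument actually runs), but your concrete execution and the bookkeeping checks you flag are all sound.
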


\begin{proof}
By Lemma \ref{key identity},
\begin{align*}
& Z_0\cdot \zeta_A(q-1,\dots,q^k(q-1)) - Z_k \\
&=\sum_{i \geq 0} \mathcal S_{\leq i}  \cdot S_i(q-1,q(q-1),\dots,q^k(q-1)) \\
&=\sum_{i \geq 0} \mathcal S_{\leq i} \cdot S_i(q-1) \cdot S_{<i}(q(q-1),\dots,q^k(q-1)) \\
&= \sum_{i \geq 0} \mathcal S_i^{(1)} \cdot S_{<i}(q(q-1),\dots,q^k(q-1)) \\
&= Z_{k-1}^{(1)}.
\end{align*}
\end{proof}

As a consequence, we obtain
\begin{theorem} \label{main theorem}
Let $n\geq1$ be an integer. Then for all $k\geq 0$, there exists $\alpha_{n,k}\in H$ such that
\begin{equation}\label{identity-mzv}
\zeta_A\left(q^n-1,(q-1)q^n,\ldots,(q-1)q^{k-1+n}\right)=\alpha_{n,k}\zeta_A\left(q^{n+k}-1\right).
\end{equation}
Further,
\begin{equation}\label{formula-for-alpha}
\alpha_{n,k}=(-1)^{k}\lambda_{n+k}^{-1}\sum_{i=0}^{k}\lambda_{n+i}\epsilon_{k-i}^{q^{n+i}}.
\end{equation}

In particular, the multiple zeta value $\zeta_A(q^n-1,q^n(q-1),\dots,q^{n+k-1}(q-1))$ is Eulerian.
\end{theorem}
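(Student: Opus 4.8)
The plan is to combine the harmonic product formula of Proposition \ref{prop-from-Z0-to-Zk} with the two Zagier-type identities already established, namely \eqref{zeta} and \eqref{epsilon-n}, and then read off the constant $\alpha_{n,k}$ explicitly. First I would observe that Proposition \ref{prop-from-Z0-to-Zk} expresses $Z_k + Z_{k-1}^{(1)}$ in terms of $Z_0$ and the multiple zeta value $\zeta_A(q-1,\dots,q^{k-1}(q-1))$. Since $Z_0|_\Xi = 1$ (noted right after Theorem \ref{key identity}), evaluating the identity of Proposition \ref{prop-from-Z0-to-Zk} at $\Xi$, and more generally at the Frobenius twists $\Xi^{(j)}$ using that $Z_0$ vanishes at $\Xi^{(1)},\Xi^{(2)},\dots$, should let me unwind the recursion $Z_k + Z_{k-1}^{(1)} = Z_0 \cdot \zeta_A(q-1,\dots,q^{k-1}(q-1))$ iteratively. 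Telescoping this with the appropriate Frobenius twists gives $Z_k$ as a finite $\FF_q$-linear (after twisting) combination $Z_k = \sum_{i=0}^{k} (-1)^{k-i} Z_0^{(i)} \, \zeta_A(q-1,\dots,q^{k-i-1}(q-1))^{(i)}$ or a similar closed form, with signs bookkept from the two-term recursion.

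Next I would bring in $\exp_A$ and $\exp_\phi$. By Theorem \ref{Zagier formula}, $\zeta_A(q-1,\dots,q^{k-1}(q-1)) = (-1)^k \epsilon_k \widetilde\pi_\phi^{q^k - 1}$, so the telescoped expression for $Z_k$ becomes a sum over $i$ of terms $Z_0^{(i)}$ times $\epsilon_{k-i}^{q^i}\widetilde\pi_\phi^{q^i(q^{k-i}-1)}$ up to signs. The key remaining point is to identify $\zeta_A(q^n-1,(q-1)q^n,\dots,(q-1)q^{n+k-1})$ with a coefficient/value extracted from $\tau^n(Z_k) = Z_k^{(n)}$: by the very definition of $Z_k$ and the fact that $Z_0 = \lim_n \mathcal S_{\leq n}$ evaluated at $\Xi$ gives $\zeta_A$-type sums, evaluating $Z_k^{(n)}$ at $\Xi$ (using $\mathcal S_{\leq n}^{(n)}|_\Xi$ and the definition of the power sums $S_{<d}$) should produce exactly $\zeta_A(q^n-1,(q-1)q^n,\dots,(q-1)q^{n+k-1})$, since $S_d(q^n-1) = S_d(q-1)^{?}$-type relations and Frobenius-twisting degree $q-1$ sums up to the level $q^n - 1$ is what the twist $\tau^n$ does. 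Then $Z_0^{(i+n)}|_\Xi$ is, by \eqref{zeta} rewritten, $\zeta_A(q^{i+n}-1)/\widetilde\pi_\phi^{0}$... more precisely the evaluation $\mathcal S_{\leq d}^{(m)}|_\Xi \to $ something involving $\lambda_{m}\widetilde\pi_\phi^{q^m-1}$ via \eqref{zeta}; matching weights $w = q^{n+k}-1$ forces the $\widetilde\pi_\phi$ powers to combine to $\widetilde\pi_\phi^{q^{n+k}-1}$, and dividing by $\zeta_A(q^{n+k}-1) = \lambda_{n+k}\widetilde\pi_\phi^{q^{n+k}-1}$ cancels the transcendental factor, leaving $\alpha_{n,k} = (-1)^k \lambda_{n+k}^{-1}\sum_{i=0}^k \lambda_{n+i}\epsilon_{k-i}^{q^{n+i}} \in H$.

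For the final "Eulerian" assertion, once \eqref{identity-mzv} is established with $\alpha_{n,k}\in H \subset K_\infty$ algebraic over $K$, we divide both sides by $\widetilde\pi_\phi^{w}$ with $w = q^{n+k}-1 = w(q^n-1,(q-1)q^n,\dots,(q-1)q^{n+k-1})$ and invoke \eqref{zeta}, which says $\zeta_A(q^{n+k}-1)/\widetilde\pi_\phi^{q^{n+k}-1} = \lambda_{n+k}\in H$ is algebraic over $K$; hence $\zeta_A(q^n-1,(q-1)q^n,\dots,(q-1)q^{n+k-1})/\widetilde\pi_\phi^{w} = \alpha_{n,k}\lambda_{n+k}$ is algebraic over $K$, which is Definition \ref{def-eulerian}.

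The main obstacle I anticipate is the precise bookkeeping in the second step: correctly tracking how the Frobenius twist $\tau^n$ interacts with evaluation at $\Xi$ and with the nested power sums $S_{<d}(\cdot)$ so that $\tau^n(Z_k)|_\Xi$ is genuinely $\zeta_A(q^n-1,(q-1)q^n,\dots)$ and not some neighboring multiple zeta value — this requires the identity $S_d(q^n-1) = S_d(1)^{q^n-1}$-type manipulations valid only in the range where \eqref{Si(n)} or its twisted analogue applies, together with telescoping the two-term recursion $Z_k + Z_{k-1}^{(1)} = Z_0\,\zeta_A(\cdots)$ over the right number of Frobenius levels. Getting the signs $(-1)^k$ and the exponents $q^{n+i}$ in \eqref{formula-for-alpha} exactly right is where the care is needed; the structural input (Proposition \ref{prop-from-Z0-to-Zk}, $Z_0|_{\Xi^{(j)}}$, and \eqref{zeta}, \eqref{epsilon-n}) is already in hand.
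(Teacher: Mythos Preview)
Your approach is correct and essentially the same as the paper's: the paper packages the two-term recursion $Z_k + Z_{k-1}^{(1)} = Z_0\,\zeta_A(q-1,\ldots,q^{k-1}(q-1))$ into the generating series $\mathcal{F}=\sum_{k\geq0}(-1)^kZ_k\tau^k$, solves $\mathcal{F}-\tau\mathcal{F}=Z_0\exp_A$ to obtain \eqref{a-formula-for-Zk}, then applies $\tau^n$ and evaluates at $\Xi$ using $\tau^m(Z_0)|_\Xi=\zeta_A(q^m-1)$ and \eqref{zeta}, \eqref{epsilon-n} --- exactly your telescoping-then-evaluate plan. Your worry about the evaluation step is unfounded: $\tau^n(Z_k)|_\Xi=\zeta_A(q^n-1,(q-1)q^n,\ldots,(q-1)q^{n+k-1})$ follows directly from the definitions of $\mathcal{S}_d$, $\chi$, and $Z_k$ (since $\tau^n(\mathcal{S}_d)|_\Xi=\sum_{a\in A_{+,d}}a\cdot a^{-q^n}=S_d(q^n-1)$ and $S_{<d}(\cdots)^{q^n}=S_{<d}((q-1)q^n,\ldots)$), with no need for \eqref{Si(n)}-type identities.
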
 

Note that the depth of the multiple zeta value in the left-hand side of (\ref{identity-mzv}) equals $k+1$. 
The coefficient $\alpha_{n,k}$ is well defined because $\lambda_{n+k}$ never vanishes, as previously noticed.
If $k=0$ the left-hand side is just $\zeta_A(q^n-1)$ so that $\alpha_{n,k}=1$ in this case.

\begin{proof}[Proof of Theorem \ref{main theorem}]
Let us write
	$$\mathcal{F}:=\sum_{k\geq 0}(-1)^kZ_k\tau^k\in \TT(K)[[\tau]].$$
The coefficient of $1=\tau^0$ is therefore $Z_0$.
We also set
	$$\mathcal{G}:=\sum_{k\geq 0}(-1)^k\zeta_A\left(q-1,\ldots,(q-1)q^{k-1}\right)\tau^k\in K_\infty[[\tau]].$$
We recall that by \eqref{expA}, 
	$$\mathcal{G}=\exp_A\in K_\infty[[\tau]].$$ Hence Proposition \ref{prop-from-Z0-to-Zk} implies
the identity in $\TT(K)[[\tau]]$:
	$$\mathcal{F}-\tau\mathcal{F}=Z_0\exp_A.$$
This allows to write
	$$\mathcal{F}=\sum_{i\geq 0}\tau^i(Z_0)\tau^i\exp_A,$$
a series expansion which is convergent for the $\tau$-adic topology.
Explicitly, equating the coefficients of $\tau^k$, we find
\begin{equation}\label{a-formula-for-Zk}
Z_k=(-1)^k\sum_{i=0}^k\epsilon_{k-i}^{q^i}\tau^{i}(Z_0)\widetilde{\pi}_\phi^{q^k-q^i},\quad k\geq0.
\end{equation}
Applying $\tau^n$ we get
$$\tau^n(Z_k)=(-1)^k\sum_{i=0}^k\epsilon_{k-i}^{q^{i+n}}\tau^{i+n}(Z_0)\widetilde{\pi}_\phi^{(q^k-q^i)q^n}.$$
Evaluating at $\Xi$ the above identity of entire functions and using (\ref{expA-phi}) and (\ref{zeta}) together with the identities
$$\tau^i(Z_0)_\Xi=\zeta_A(q^i-1),\quad i\geq0,$$
we obtain 
\begin{multline*}
\zeta_A\left(q^n-1,(q-1)q^n,\ldots,(q-1)q^{k-1+n}\right)=\\ =(-1)^{k}\lambda_{n+k}^{-1}\left(\sum_{i=0}^{k}\lambda_{n+i}\epsilon_{k-i}^{q^{n+i}}\right)\zeta_A(q^{k+n}-1),\end{multline*}
and (\ref{formula-for-alpha}) holds.
This completes the proof of the Theorem.
\end{proof}

\begin{remark} 
Applying $\tau^n$ for $n>0$ to both sides of the identity of Proposition \ref{prop-from-Z0-to-Zk}, evaluating at $\Xi$, 
and assuming that $X=\PP^1$, yields directly \cite[Theorem 5.1]{Che17}. If $k=1$, (\ref{a-formula-for-Zk}) yields
$$Z_k=-\epsilon_1Z_0\widetilde{\pi}^{q-1}-Z_0^{(1)}.$$
Since $\epsilon_1=-\lambda_1$ the above is equivalent, by  (\ref{zeta}), to the harmonic product formula
\begin{equation}\label{special-identity}Z_0\zeta_A(q-1)=Z_0^{(1)}+Z_1.\end{equation}
A particular case of this formula, when $\phi=C$ is Carlitz's module, so that $A=\FF_q[\theta]$, appears in \cite[\S 7.2.1]{GP21}.
\end{remark}

\begin{remark}
Applying $\tau^n$ with $n>0$ to both sides of (\ref{special-identity}) and evaluating at $\Xi$ yields the formula
	$$\zeta_A(q^n-1)\zeta_A(q-1)^{q^n}=\zeta_A(q^{n+1}-1)+\zeta_A\left(q^n-1,(q-1)q^n\right)$$
which is a generalization of \cite[Theorem 4]{Tha09}. The case $n=0$ makes sense too, but returns a tautological identity. Theorem \ref{main theorem} in the case $k=1$ rewrites as
\begin{equation} \label{alpha n1}
\zeta_A\left(q^n-1,(q-1)q^n\right)=-\left(\frac{\lambda_n\epsilon_1^{q^n}}{\lambda_{n+1}}+1\right)\zeta_A(q^{n+1}-1)
\end{equation}
and agrees with the above formula again in virtue of the fact that $\epsilon_1=-\lambda_1$.
\end{remark}


\section{Applications}

In this section, we consider Drinfeld modules on elliptic curves and ramifying hyperelliptic curves as studied in \cite{Chu20,GP18}.  We then derive a simple formula for the ratio $\alpha_{n,1}$. As a consequence, we prove a conjecture of Lara Rodr\'iguez and Thakur (see Theorem \ref{LRT conjecture}).

\subsection{Elliptic curves: the genus 1 case} \label{elliptic curves}

We work with Drinfeld modules on elliptic curves (i.e., $g=1$ and $d_\infty=1$) which were studied in detail in \cite{GP18}. We let $E$ be the elliptic curve defined over $\Fq$ given by
	\[ E: \eta^2+a_1 \theta \eta+a_3 \eta=\theta^3+a_2 \theta^2+a_4 \theta+a_6, \quad a_i \in \Fq. \]
We denote by $A=\Fq[\theta,\eta]$ the ring of functions on $E$ and $K$ its fraction field. We recall the $\Fq$-algebra homomorphism $\chi: A \to A \otimes \mathbb C_\infty$ given by $a \mapsto a \otimes 1$. We put $y=\chi(\eta)$ and $t=\chi(\theta)$.

The shtuka function attached to the standard sign-normalized Drinfeld module of rank one is given by
	\[ f=\frac{\nu}{\delta}=\frac{y-\eta-m(t-\theta)}{t-\alpha} \]
where $m=\frac{\eta-\beta^q}{\theta-\alpha^q}$ and $V=(\alpha,\beta) \in E(H)$ is the Drinfeld divisor. Recall that for all $i \geq 0$,
	\[ \epsilon_i=\frac{1}{f f^{(1)} \dots f^{(i-1)}} \Big|_{\Xi^{(i)}}, \quad \lambda_i=\frac{\delta^{(i+1)}}{\delta^{(1)} f^{(1)} \dots f^{(i)}} \Big|_{\Xi} \]
The first equality is proved by Thakur \cite[Proposition 0.3.6]{Tha93} as mentioned before. The second equality is shown by Anderson \cite[Proposition 0.3.8]{Tha93}, Green and Papanikolas \cite[Theorem 3.4 and Corollary 3.5]{GP18}.

By \eqref{alpha n1},
\begin{align*}
\alpha_{n,1} &=-\left(\frac{\lambda_n\epsilon_1^{q^n}}{\lambda_{n+1}}+1\right) \\
&= - \left(\frac{\delta^{(n+1)} f^{(n+1)}}{\delta^{(n+2)}} \Big|_\Xi \frac{1}{(f \big|_{\Xi^{(1)}})^{q^n}}+1 \right) \\
&= - \left(\frac{\nu^{(n+1)}}{\delta^{(n+2)}} \Big|_\Xi \frac{1}{(f \big|_{\Xi^{(1)}})^{q^n}}+1\right).
\end{align*}

\subsection{Ramifying hyperelliptic curves} \label{hyperelliptic curves}

In \cite{Chu20}, the first author extended the previous results to the case of ramifying hyperelliptic curves.  More precisely, letting $g \geq 1$ be an integer, we consider a ramifying hyperelliptic curve $X$ defined over $\Fq$ is given by 
	\[ X: \eta^2+F_2(\theta) \eta=F_1(\theta) \]
with $F_i \in \Fq[\theta]$, $F_1$ monic of degree $2g+1$ and $F_2$ of degree at most $g$. Thus the genus of $X$ equals $g$. We denote by $A=\Fq[\theta,\eta]$ the ring of functions on $X$ and $K$ its fraction field. We recall the $\Fq$-algebra homomorphism $\chi: A \to A \otimes \mathbb C_\infty$ given by $a \mapsto a \otimes 1$. We put $y=\chi(\eta)$ and $t=\chi(\theta)$.

By \cite[Proposition 3.1]{Chu20}, the shtuka function attached to the standard sign-normalized Drinfeld module of rank one can be expressed as
	\[ f=\frac{\nu}{\delta} \]
where $\nu=y-\eta-Q$ and $\delta, Q \in H[t]$ of degree $g$ in $t$. Further, we have $\widetilde{\sgn}(\nu)=\widetilde{\sgn}(\delta)=1$. Then by \cite[Proposition 0.3.6]{Tha93} and \cite[Proposition 3.3]{Chu20}, we still get
	\[ \epsilon_i=\frac{1}{f f^{(1)} \dots f^{(i-1)}} \Big|_{\Xi^{(i)}}, \quad \lambda_i=\frac{\delta^{(i+1)}}{\delta^{(1)} f^{(1)} \dots f^{(i)}} \Big|_{\Xi} \]
Thus we obtain a similar expression for $\alpha_{n,1}$: 
\begin{align*}
\alpha_{n,1} = - \left(\frac{\nu^{(n+1)}}{\delta^{(n+2)}} \Big|_\Xi \frac{1}{(f \big|_{\Xi^{(1)}})^{q^n}}+1\right).
\end{align*}

\subsection{Class number one base rings} 

Following Thakur \cite{Tha93} (see also \cite{LRT20}), another important class of base rings consists of class number one rings, i.e., we require that $A$ is a principal ideal domain. There are exactly four of them listed as Examples (i)-(iv) in \cite{LRT20}:
\begin{enumerate}
\item[(i)] $A = \bb F_2[\theta,\eta] / (\eta^2 + \eta + \theta^3 + \theta + 1)$;

\item[(ii)] $A = \bb F_3[\theta,\eta] / (\eta^2  - (\theta^3 - \theta - 1))$;

\item[(iii)] $A = \bb F_4[\theta,\eta] / (\eta^2 + \eta + \theta^3 + w)$, where $w^2+w+1=0$, i.e., $w$ is a generator of $\bb F_4^\times$;

\item[(iv)] $A = \bb F_2[\theta,\eta] / (\eta^2 + \eta + \theta^5 + \theta^3 + 1)$.

\end{enumerate}

They considered the following ratios (in \cite{LRT20}, these ratios are denoted by $R_n$)
\[
\alpha_{n,1} := \frac{\zeta_A(q^n-1,q^{n+1}-q^n)}{\zeta_A(q^{n+1}-1)},
\]
and conjectured that they are in $K$ in the above four cases. In each of the four cases, we will set $X$ and $Y$ to be some $q^n$-th power of $\theta$ and $\eta$ respectively.

\begin{conjecture}[\cite{LRT20}, Conjecture 3.3] \label{conjLRT} 
The following formulas hold in Examples (i)-(iv) above:
\begin{enumerate}
\item[(i)] Set $X := \theta^{2^n}$ and $Y := \eta^{2^n}$. Then
\begin{equation} \label{eq:RTEg1}
\begin{split}
\alpha_{n,1} &= \frac{\theta^{2^{n+1}}+\theta^2}{\eta^{2^{n+1}}+\eta+\theta^{2^{n+1}+1}+\theta} \\
&= \frac{X^2+\theta^2}{Y^2+\eta+\theta X^2+\theta}.
\end{split}
\end{equation}

\item[(ii)] Set $X := \theta^{3^n}$ and $Y := \eta^{3^n}$. Then
\begin{equation} \label{eq:RTEg2}
\begin{split}
\alpha_{n,1} &= \frac{(\theta^{3^n}-\theta) (\eta^{3^n}-\eta)^2 + (\theta^{3^n}-\theta) (-\theta^{3^n}-\theta^3-\theta+1) }{\theta^{2 \cdot 3^{n+1}} + \theta^{3^{n+1}+1} + \theta^{3^{n+1}} + \eta^{3 \cdot 3^n + 1} + \theta^2-\theta+1} \\
&= \frac{(X-\theta) (Y-\eta)^2 - X^2 + (-\theta^3 + 1) X + \theta^2 - \theta }{\eta Y^3 + X^6 + (\theta+1) X^3 + \theta^2-\theta+1}.
\end{split}
\end{equation}

\item[(iii)] Set $X := \theta^{4^n}$ and $Y := \eta^{4^n}$. Then 
\begin{equation} \label{eq:RTEg3}
\begin{split}
\alpha_{n,1} &= \frac{(\theta^{4^n}+\theta)(\eta^{4^{n+1}}+\eta^4) + (\theta^{4^{n+1}} + \theta^4) (\theta^{4^n+2} + \theta^3+1) + (\theta^{4^n}+\theta)}{\theta^{2 \cdot 4^{n+1}+2} + (\theta^{4^{n+1}} + \theta) (\eta^{4^{n+1}} + \eta) + \theta^{4^{n+1}}} \\
&= \frac{(X+\theta)(Y^4+\eta^4) + (X^4 + \theta^4) (\theta^{2} X + \theta^3+1) + X+\theta}{\theta^2 X^8 + (X^4 + \theta)(Y^4 + \eta) + X^4} \\
&= \frac{(X+\theta)(Y^4+\eta^4) + \theta^2 X^5 + (\theta^3 + 1) X^4 + (\theta^6 + 1) X + \theta^7 + \theta^4 + \theta}{(X^4 + \theta)(Y^4 + \eta) + \theta^2 X^8 + X^4}. 
\end{split}
\end{equation}

\item[(iv)] Set $X := \theta^{2^{n-1}}$ and $Y := \eta^{2^{n-1}}$. Then
\begin{equation} \label{eq:RTEg4}
\begin{split}
\alpha_{n,1} &= \frac{1}{X^{24} + \theta X^{16} + (\theta+1) X^8 + \theta^2 + \theta} \bigg( X^{22} + (1 + \theta) \\
&\phantom{==} \cdot (X^{20} + X^{18} + X^{16}) + (\theta^2+\theta+1)(X^{12} + X^{10}) + X^9 + \theta X^8 \\
&\phantom{==} + X^5 + (Y + \eta)(X^4 + X^2) + \theta^2 + \theta \bigg).
\end{split}
\end{equation}
\end{enumerate}
\end{conjecture}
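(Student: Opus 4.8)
The plan is to verify Conjecture \ref{conjLRT} by direct computation of the shtuka functions $f = \nu/\delta$ in each of the four class-number-one cases, and then to plug them into the closed formula $\alpha_{n,1} = -\bigl(\tfrac{\nu^{(n+1)}}{\delta^{(n+2)}}\bigr|_\Xi \cdot (f|_{\Xi^{(1)}})^{-q^n} + 1\bigr)$ established in \S\ref{hyperelliptic curves} and (for case (i), which is elliptic) in \S\ref{elliptic curves}. Cases (i)--(iii) are of genus one and case (iv) is of genus two; in all of them the shtuka function and the Drinfeld divisor $V$ are already known explicitly from \cite{GP18,Chu20}, so the substantive task is bookkeeping with Frobenius twists.

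First I would recall, for each curve $X: \eta^2 + F_2(\theta)\eta = F_1(\theta)$, the explicit shtuka function $f = \nu/\delta$ with $\nu = y - \eta - Q$ and $\delta, Q \in H[t]$, together with the coordinates $(\alpha,\beta)$ (or their genus-two analogue) of the Drinfeld divisor $V$; in the genus one cases, $\alpha, \beta$ are algebraic over $\FF_q$ and one solves an explicit polynomial system coming from $(f) = V^{(1)} - V + \Xi - \infty$. Next, since $\Xi$ is the point with $\chi_\Xi$ the natural inclusion $A \subset \CC_\infty$, the evaluation $g|_{\Xi^{(j)}}$ of an entire function amounts to substituting $t \mapsto \theta^{q^j}$, $y \mapsto \eta^{q^j}$ (after a Frobenius twist on the $H$-coefficients). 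So I would compute $f|_{\Xi^{(1)}}$, which is a concrete element of $H$ obtained by setting $t = \theta^q$, $y = \eta^q$ in $\nu/\delta$; raising it to the power $q^n$ then introduces the quantities $X, Y$ that are the prescribed $q^n$-th powers of $\theta, \eta$ (in case (iv), $q^{n-1}$-th powers, matching the shift in the conjecture). Likewise $\tfrac{\nu^{(n+1)}}{\delta^{(n+2)}}\bigr|_\Xi$ is evaluated by substituting $t = \theta$, $y = \eta$ into the $(n+1)$-st, resp.\ $(n+2)$-nd, Frobenius twists of $\nu$ and $\delta$, which again surfaces powers of $\theta, \eta$ that are $q^{n}$-th (or $q^{n+1}$-st) powers and can be re-expressed via $X, Y$.

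Then I would assemble $\alpha_{n,1}$ from these pieces and simplify. The key algebraic input at this stage is the defining equation of $X$ itself: whenever $\eta^{q^j}$ or high powers of $\eta$ appear, they must be reduced using $Y^2 = \eta^2 + F_2 \eta + F_1$ evaluated at the appropriate twist (e.g.\ $Y^2 + \eta = X^3 + X + 1$ in case (i), after the Frobenius relation $\eta^{2^n} = Y$, $\theta^{2^n} = X$), and similarly the relations satisfied by $\alpha, \beta$ under Frobenius (these points are $\overline{\FF_q}$-rational, so $\alpha^{q^m}, \beta^{q^m}$ are governed by the minimal polynomial of $V$). Clearing denominators and using these relations should collapse the expression to the stated rational function in $\theta, \eta$ (equivalently in $X, Y$). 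I would do case (i) in full detail as the template, then indicate that (ii)--(iv) follow by the identical procedure with the curve-specific data substituted, exhibiting the final simplified fractions and checking they coincide with \eqref{eq:RTEg1}--\eqref{eq:RTEg4}.

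The main obstacle I anticipate is purely computational rather than conceptual: the genus-two case (iv) involves $\delta, Q$ of degree $g = 2$ in $t$, two-dimensional Drinfeld divisor data, and a fifth-degree $F_1$, so the Frobenius twists $\nu^{(n+1)}, \delta^{(n+2)}$ and the reduction of $\eta$-powers produce lengthy intermediate expressions; keeping track of which monomials in $\theta, \eta$ are genuine $q^{n-1}$-th powers (hence packageable as $X, Y$) versus which are "residual" low-degree terms requires care, and the final match with the somewhat intricate right-hand side of \eqref{eq:RTEg4} is where an error would most likely creep in. A secondary subtlety is ensuring the normalizations are consistent --- that $\widetilde\pi_\phi$ and $\sgn$ are chosen so that $\epsilon_0 = \lambda_0 = 1$ and $\widetilde\sgn(\nu) = \widetilde\sgn(\delta) = 1$ --- since an overlooked $\FF_q^\times$-scalar would spoil the identity; this is handled by invoking the sign conditions recorded in \S\ref{hyperelliptic curves} and \cite[Proposition 0.3.6]{Tha93}.
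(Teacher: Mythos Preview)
Your proposal is correct and follows essentially the same route as the paper: compute the shtuka function $f=\nu/\delta$ explicitly in each of the four cases, plug into the closed formula $\alpha_{n,1} = -\bigl(\tfrac{\nu^{(n+1)}}{\delta^{(n+2)}}\bigr|_\Xi \cdot (f|_{\Xi^{(1)}})^{-q^n} + 1\bigr)$, and reduce using the defining equation of the curve. The only cosmetic difference is that the paper first records the resulting expression in each case and then verifies equality with the conjectured formula by cross-multiplying and cancelling (rather than simplifying directly to it); note also the small slip in your example relation, which should read $Y^2+Y=X^3+X+1$ in case~(i).
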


\begin{theorem} \label{LRT conjecture}
Conjecture \ref{conjLRT} holds.
\end{theorem}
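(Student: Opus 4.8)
The plan is to combine the abstract identity \eqref{alpha n1} for $\alpha_{n,1}$ with the explicit description of the shtuka function $f=\nu/\delta$ in each of the four cases, which has already been worked out in \S\ref{elliptic curves}--\S\ref{hyperelliptic curves}. Concretely, Theorem~\ref{main theorem} in depth two gives
\[
\alpha_{n,1}=-\left(\frac{\nu^{(n+1)}}{\delta^{(n+2)}}\Big|_\Xi\cdot\frac{1}{\left(f\big|_{\Xi^{(1)}}\right)^{q^n}}+1\right),
\]
so the entire computation reduces to evaluating the rational functions $\nu^{(n+1)}$, $\delta^{(n+2)}$ at $\Xi$ and $f=\nu/\delta$ at $\Xi^{(1)}$. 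Recall that evaluation at $\Xi$ is just the specialization $t\mapsto\theta$, $y\mapsto\eta$, while evaluation at $\Xi^{(1)}$ sends $t\mapsto\theta^q$, $y\mapsto\eta^q$; and the twist $(\cdot)^{(k)}$ raises the $H$-coefficients (i.e. the functions of $\alpha,\beta$ in the genus-one case, or the coefficients of $\delta,Q$ in the hyperelliptic case) to the $q^k$-th power while leaving $t,y$ fixed. So the first step is to record, in each of cases (i)--(iv), the shtuka data $\nu=y-\eta-Q$ and $\delta$ explicitly as elements of $H[t]$ (the curves here have class number one, so $H=K$ and $\alpha,\beta$ are honest elements of $K$ expressible in $\theta,\eta$; these presentations are already available in \cite{GP18,Chu20} and \cite{LRT20}).

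The second step is the bookkeeping of twists. Here the substitutions $X:=\theta^{q^n}$, $Y:=\eta^{q^n}$ (or $\theta^{2^{n-1}},\eta^{2^{n-1}}$ in case (iv), reflecting the appearance of $n+2$ rather than $n+1$ twists because $q=2$ and $\delta$ has a slightly different shape) are precisely designed so that $\nu^{(n+1)}\big|_\Xi$ and $\delta^{(n+2)}\big|_\Xi$ become \emph{polynomial expressions in $\theta,\eta,X,Y$}: the twist raises the $\alpha,\beta$-part to a $q^{n+1}$- or $q^{n+2}$-power, and after specializing $t\mapsto\theta,\,y\mapsto\eta$ one uses the defining equation of the curve to rewrite powers of $\eta$ (and of $Y$) linearly. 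One also needs $f\big|_{\Xi^{(1)}}=\nu\big|_{\Xi^{(1)}}/\delta\big|_{\Xi^{(1)}}$ raised to the $q^n$; the numerator and denominator each become explicit elements of $K$, and the $q^n$-power again turns $\theta,\eta$ into $X,Y$. Substituting all of this into the displayed formula for $\alpha_{n,1}$, clearing denominators, and simplifying using the curve equation should reproduce exactly the fractions in \eqref{eq:RTEg1}--\eqref{eq:RTEg4}; the two (or three) displayed forms of each answer differ only by multiplying numerator and denominator by a common factor and by applying the curve relation, so matching any one of them suffices.

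The main obstacle is purely computational rather than conceptual: one must correctly identify the shtuka functions $f=\nu/\delta$ for all four curves (getting the normalization, i.e. the sign condition $\widetilde\sgn(\nu)=\widetilde\sgn(\delta)=1$, right is essential, as an error there scales $\alpha_{n,1}$ incorrectly), and then carry out the twist-and-evaluate manipulation without arithmetic slips, repeatedly invoking the curve equation to linearize $\eta^2$ and $Y^2$ (in characteristic $2$, $Y^2=Y+X^3+\dots$, etc.). A secondary subtlety is case (iv), where $q=2$ and the genus is $2$, so $\delta$ has degree $2$ in $t$ and the exponent shift forces the substitution $X=\theta^{2^{n-1}}$ instead of $\theta^{2^n}$; one should double-check that the number of twists and the resulting power match Lara Rodr\'iguez--Thakur's normalization of $R_n$. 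Once these explicit shtuka presentations are in hand, the verification is a finite, mechanical check, best organized case by case, and the theorem follows.

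\begin{proof}[Proof of Theorem~\ref{LRT conjecture}]
In each of the four cases $H=K$ since $A$ has class number one, and the explicit shtuka function $f=\nu/\delta$ with $\nu=y-\eta-Q$, $\delta\in K[t]$, normalized so that $\widetilde\sgn(\nu)=\widetilde\sgn(\delta)=1$, is as recalled in \S\ref{elliptic curves} (case (i)) and \S\ref{hyperelliptic curves} (cases (ii)--(iv)), following \cite{GP18,Chu20,LRT20}. By \eqref{alpha n1} together with the formulas for $\epsilon_1,\lambda_n,\lambda_{n+1}$ in terms of $f$ recalled there,
\[
\alpha_{n,1}=-\left(\frac{\nu^{(n+1)}}{\delta^{(n+2)}}\Big|_\Xi\cdot\frac{1}{\left(f\big|_{\Xi^{(1)}}\right)^{q^n}}+1\right).
\]
Writing out $\nu^{(n+1)}$ and $\delta^{(n+2)}$ means raising the $K$-coefficients (the entries built from $\alpha,\beta$, resp. the coefficients of $\delta$ and $Q$) to the appropriate $q$-power while fixing $t,y$; evaluating at $\Xi$ then sets $t=\theta$, $y=\eta$, and evaluating $f$ at $\Xi^{(1)}$ sets $t=\theta^q$, $y=\eta^q$. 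Performing these substitutions, setting $X:=\theta^{q^n},Y:=\eta^{q^n}$ in cases (i)--(iii) and $X:=\theta^{2^{n-1}},Y:=\eta^{2^{n-1}}$ in case (iv), clearing denominators, and reducing powers of $\eta$ and $Y$ using the defining equation of the curve, a direct computation in each case yields precisely the right-hand sides of \eqref{eq:RTEg1}, \eqref{eq:RTEg2}, \eqref{eq:RTEg3} and \eqref{eq:RTEg4}, respectively; the several displayed forms of each expression are related by multiplying numerator and denominator by a common polynomial and applying the curve relation. This proves Conjecture~\ref{conjLRT}.
\end{proof}
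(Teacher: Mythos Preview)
Your approach is exactly the paper's: start from the closed formula
\[
\alpha_{n,1}=-\left(\frac{\nu^{(n+1)}}{\delta^{(n+2)}}\Big|_\Xi\cdot\frac{1}{(f|_{\Xi^{(1)}})^{q^n}}+1\right),
\]
plug in the explicit shtuka data $f=\nu/\delta$ for each of the four class-number-one rings, evaluate, and match against the conjectured expressions using the curve equation. Two small corrections: first, cases (i)--(iii) are all elliptic (genus $1$) and fall under \S\ref{elliptic curves}, while only case (iv) is the genus-$2$ hyperelliptic curve of \S\ref{hyperelliptic curves}; your attribution ``\S\ref{elliptic curves} (case (i)) and \S\ref{hyperelliptic curves} (cases (ii)--(iv))'' is misplaced. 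Second, the choice $X=\theta^{2^{n-1}}$ in (iv) is purely Lara Rodr\'iguez--Thakur's presentational convention, not forced by any exponent shift in the formula. Beyond that, the only difference is that the paper actually carries out the four computations in full (obtaining an intermediate closed form for $\alpha_{n,1}$ in each case, then cross-multiplying against the conjectured expression and reducing via the curve relation), whereas you assert that ``a direct computation \dots\ yields precisely the right-hand sides''; for a complete proof you would need to supply those verifications.
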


\begin{proof}
In the case of elliptic curve and ramifying hyperelliptic curves, we have proved an expression in terms of the shtuka function $f$ for the standard sign-normalized rank one Drinfeld module $\phi$:
\[
\alpha_{n,1} = -\left( \left. \frac{\nu^{(n+1)}}{\delta^{(n+2)}} \right|_{\Xi} \frac{1}{(f|_{\Xi^{(1)}})^{q^n}} +1 \right).
\]
Here $\nu, \delta$ are respectively the numerator and denominator of the shtuka function $f$, as given in \S \ref{elliptic curves} and \S \ref{hyperelliptic curves}. Upon evaluation and reusing the notations from Conjecture \ref{conjLRT}, we have
\begin{enumerate}
\item[(i)] 
\[
f = \frac{y - \eta - \theta(t - \theta)}{t - \theta - 1}, \qquad f|_{\Xi^{(1)}} = 1,
\]
\begin{equation} \label{eq:CNPEg1}
\begin{split}
\alpha_{n,1} &= \frac{\eta + \eta^{2^{n+1}} + \theta^{2^{n+1}+1} + \theta + 1}{\theta^{2^{n+2}} +\theta+1} \\
&= \frac{Y^2 + \eta + \theta X^2 + \theta + 1}{X^4 +\theta+1}.
\end{split}
\end{equation}

\item[(ii)] 
\[
f = \frac{y - \eta - \eta(t - \theta)}{t - \theta - 1}, \qquad f|_{\Xi^{(1)}} = \frac{1}{\eta},
\]
\begin{equation} \label{eq:CNPEg2}
\begin{split}
\alpha_{n,1} &= \frac{(\eta - \eta^{3^{n+1}} - \eta^{3^{n+1}} (\theta - \theta^{3^{n+1}}) ) \eta^{3^n} - \theta^{3^{n+2}} + \theta - 1 }{\theta^{3^{n+2}} - \theta + 1} \\
&= \frac{Y^4 (X^3 - \theta - 1) + \eta Y - X^9 + \theta - 1}{X^9 - \theta + 1}.
\end{split}
\end{equation}

\item[(iii)] 
\[
f = \frac{y - \eta - \theta^2(t - \theta)}{t - \theta}, \qquad f|_{\Xi^{(1)}} = \frac{1}{\theta^4+\theta},
\]
\begin{equation} \label{eq:CNPEg3}
\begin{split}
\alpha_{n,1} &= \frac{ (\eta^{4^{n+1}} + \eta + \theta^{2 \cdot 4^{n+1}} (\theta + \theta^{4^{n+1}})) (\theta^{4^{n+1}} + \theta^{4^n}) + \theta^{4^{n+2}} + \theta}{\theta^{4^{n+2}} + \theta} \\
&= \frac{ (Y^4 + \eta)(X^4 + X) + X^{13} + \theta X^{12} + \theta X^{9} + \theta }{X^{16} + \theta}.
\end{split}
\end{equation}

\item[(iv)] 
\[
f = \frac{y + \eta - (t+\theta)(\theta^4 + \theta^3 + \theta^2(t+1))}{\theta^3 + t\theta^2 + (1+t) \theta + t^2+t}, \qquad f|_{\Xi^{(1)}} = \frac{1}{\theta^2+\theta},
\]
\begin{equation} \label{eq:CNPEg4}
\begin{split}
\alpha_{n,1} &= \frac{1}{\theta^{3 \cdot 2^{n+2}} + \theta^{2 \cdot 2^{n+2}+1} + (1 + \theta) \theta^{2^{n+2}} + \theta^2 + \theta} \bigg( \big( \eta + \eta^{2^{n+1}} \\ 
&\phantom{==} + (\theta + \theta^{2^{n+1}}) (\theta^{4 \cdot 2^{n+1}} + \theta^{3 \cdot 2^{n+1}} + \theta^{2 \cdot 2^{n+1}} (\theta + 1)) \big) \cdot (\theta^{2^{n+1}} + \theta^{2^n}) \\
&\phantom{==} + (\theta^{3 \cdot 2^{n+2}} + \theta^{2 \cdot 2^{n+2}+1} + (1 + \theta) \theta^{2^{n+2}} + \theta^2 + \theta) \bigg) \\
&= \frac{1}{X^{24} + \theta X^{16} + (\theta+1) X^8 + \theta^2 + \theta} \bigg( \big( \eta + Y^4 \\
&\phantom{==} + (\theta + X^4) (X^{16} + X^{12} + (\theta + 1) X^8) \big) \cdot (X^4 + X^2) \\ 
&\phantom{==} + (X^{24} + \theta X^{16} + (\theta+1) X^8 + \theta^2 + \theta) \bigg) \\
&= \frac{1}{X^{24} + \theta X^{16} + (\theta+1) X^8 + \theta^2 + \theta} \bigg( (\eta + Y^4)(X^4+X^2) \\
&\phantom{==} + X^{22} + (\theta+1) (X^{20} + X^{18}) + (\theta+1) X^{16} + X^{14} \\
&\phantom{==} + (\theta^2+\theta) (X^{12} + X^{10}) + (\theta+1) X^8 + \theta^2 + \theta \bigg).
\end{split}
\end{equation}

\end{enumerate}

We now prove that our expressions match those in Conjecture \ref{conjLRT}. 

\medskip
\noindent {\bf Example (i) \texorpdfstring{$A=\bb F_2[\theta,\eta] / (\eta^2 + \eta + \theta^3 + \theta + 1)$.}{g = 1, h = 1, F2}}

Set (\ref{eq:RTEg1}) and (\ref{eq:CNPEg1}) to be equal and cross multiply. We want to show that the following equality holds:
\begin{equation*}
Y^4 + \eta^2 + \theta^2 X^4 + \theta^2 + Y^2 + \eta + \theta X^2 + \theta = X^6 + \theta^2 X^4 + (\theta+1) X^2 + \theta^3 + \theta^2.  
\end{equation*}
Using $\eta^2 + \eta = \theta^3 + \theta + 1$, we can simplify the above equation as
\begin{equation*}
Y^4 + Y^2 = X^6 + X^2 + 1.
\end{equation*}
This is true since 
\[
Y^2 + Y = \eta^{2 \cdot 2^n} + \eta^{2^n} = (\eta^2 + \eta)^{2^n} = (\theta^3 + \theta + 1)^{2^n} = X^3 + X + 1,
\]
and we obtain the desired equality by squaring this equation. This proves that the expressions (\ref{eq:CNPEg1}) and (\ref{eq:RTEg1}) are equal.



\medskip
\noindent {\bf Example (ii) \texorpdfstring{$A=\bb F_3[\theta,\eta] / (\eta^2  - (\theta^3 - \theta - 1))$.}{g = 1, h = 1, F3}}

By squaring $\eta^2 = \theta^3 - \theta - 1$, we have $\eta^4 = \theta^6 + \theta^4 + \theta^3 + \theta^2 - \theta + 1$. Hence 
$$
Y^4 = (\eta^4)^{3^n} = X^6 + X^4 + X^3 + X^2 - X + 1.
$$
Thus we can rewrite the expression in (\ref{eq:CNPEg2}) as 
\begin{multline} \label{eq:CNPEg2Rewrite}
\alpha_{n,1} = \frac{1}{X^9 - \theta + 1} \big(\eta Y + X^7 - \theta X^6 + X^5 \\
+ (-\theta+1) X^4 - \theta X^3 - (\theta+1) X^2 + (\theta + 1) X + 1 \big).
\end{multline}

We also expand the terms involving $Y$ in \eqref{eq:RTEg2}. First, we have that 
\[ Y^2 = (\eta^2)^{3^n} = (\theta^3 - \theta - 1)^{3^n} = X^3 - X - 1. \]
Thus
\begin{equation*}
\begin{split}
(Y-\eta)^2 &= Y^2 + \eta Y + \eta^2 \\
&= (X^3 - X - 1) + \eta Y + (\theta^3 - \theta - 1) \\
&= \eta Y + X^3 - X + \theta^3 - \theta + 1.
\end{split}
\end{equation*}
At the same time, 
$$
Y^3 = Y^2 \cdot Y = (X^3 - X - 1)Y.
$$
Putting these into \eqref{eq:RTEg2}, we have that the expression equals 
\begin{equation*}
\begin{split}
&\phantom{=1} \frac{(X-\theta)(\eta Y + X^3 - X + \theta^3 - \theta + 1) - X^2 + (-\theta^3 + 1) X + \theta^2 - \theta }{(X^3-X-1) \eta Y + X^6 + (\theta+1) X^3 + \theta^2-\theta+1} \\
&= \frac{(X - \theta) \eta Y + X^4 - \theta X^3 + X^2 - X - \theta^2 + \theta}{(X^3-X-1) \eta Y + X^6 + (\theta+1) X^3 + \theta^2-\theta+1} .
\end{split}
\end{equation*}

Set this and \eqref{eq:CNPEg2Rewrite} to be equal and cross multiply, we want to show that the following equality holds:
\begin{equation*}
\begin{split}
&\phantom{=1} \big( X^{10} - \theta X^9 + (-\theta + 1) X + \theta^2 - \theta \big) \eta Y + X^{13} - \theta X^{12} + X^{11} \\
&\phantom{=1} - X^{10} + (-\theta^2+\theta) X^9 + (-\theta+1) X^4 + (\theta^2 - \theta) X^3 \\
&\phantom{=1} + (-\theta+1) X^2 + (\theta-1) X + \theta^3 + \theta^2 + \theta \\
&= \eta^2 (X^3-X-1) Y^2 + \big( X^{10} - \theta X^9 + (-\theta+1) X + \theta^2 - \theta \big) \eta Y \\
&\phantom{=1} + X^{13} - \theta X^{12} + X^{11} - X^{10} + (-\theta^2+\theta) X^9 + (-\theta^3 + \theta + 1) X^6 \\
&\phantom{=1} - (\theta^3 + 1) X^4 + (-\theta^3 + \theta^2 + 1) X^3 + (-\theta^3 - 1) X^2 + (\theta^3 + 1) X \\
&\phantom{=1} + \theta^2 - \theta + 1.
\end{split}
\end{equation*}

Cancelling common terms and rearranging, we want the following equality to hold: 
\begin{equation*}
\begin{split}
\phantom{=1} \eta^2 (X^3 - X - 1) Y^2 
&= (\theta^3 - \theta - 1) X^6 + (\theta^3 - \theta - 1) X^4 + (\theta^3 - \theta - 1) X^3 \\
&\phantom{=1} + (\theta^3 - \theta - 1) X^2 - (\theta^3 - \theta - 1) X + (\theta^3 - \theta - 1).
\end{split}
\end{equation*}
This is indeed true since $\eta^2 = \theta^3 - \theta - 1$, and 
$$
(X^3-X-1) Y^2 = (X^3-X-1)^2 = X^6 + X^4 + X^3 + X^2 - X + 1.
$$
This proves that the expressions (\ref{eq:CNPEg2}) and (\ref{eq:RTEg2}) are equal.


\medskip
\noindent {\bf Example (iii) \texorpdfstring{$A=\bb F_4[\theta,\eta] / (\eta^2 + \eta + \theta^3 + w)$ where $w^2+w+1=0$.}{g = 1, h = 1, F4}}


Set the expressions (\ref{eq:RTEg3}) and (\ref{eq:CNPEg3}) to be equal and cross multiply. We want to show that the following equality holds:
\begin{equation*}
\begin{split}
&\phantom{=1} (X^{17} + \theta X^{16} + \theta X + \theta^2) (Y^4 + \eta^4) + \theta^2 X^{21} + (\theta^3 + 1) X^{20} + (\theta^6 + 1) X^{17} \\
&\phantom{=1} + (\theta^7 + \theta^4 + \theta) X^{16} + \theta^3 X^5 + (\theta^4 + \theta) X^4 + (\theta^7 + \theta) X + \theta^8 + \theta^5 + \theta^2 \\
&= (X^8 + X^5 + \theta X^4 + \theta X) (Y^4 + \eta)^2 + \big(X^{17} + \theta X^{16} + X^8 + X^5 \\
&\phantom{=1} + \theta X^4 + \theta^2 \big) (Y^4 + \eta) + \theta^2 X^{21} + \theta^3 X^{20} + (\theta^3+1) X^{17} + \theta X^{16} \\
&\phantom{=1} + \theta X^{13} + \theta^3 X^8 + \theta X^4.
\end{split}
\end{equation*}

Recall that $\eta^2 = \eta + \theta^3 + w$. Squaring both sides, we have that 
\[
\eta^4 = \eta^2 + \theta^6 + w^2 = \eta + \theta^6 + \theta^3 + w^2 + w
= \eta + \theta^6 + \theta^3 + 1.
\]
Using this and rearranging the long equation, we want to show that the following equality holds:
\begin{equation*}
\begin{split}
&\phantom{=1} (X^8 + X^5 + \theta X^4 + \theta X) (Y^4 + \eta)^2 + (X^8 + X^5 + \theta X^4 + \theta X) (Y^4 + \eta) \\
&= (X^{17} + \theta X^{16} + \theta X + \theta^2) (\theta^6 + \theta^3 + 1) + X^{20} + (\theta^6 + \theta^3) X^{17} + (\theta^7 + \theta^4) X^{16} \\
&\phantom{=1} + \theta X^{13} + \theta^3 X^8 + \theta^3 X^5 + \theta^4 X^4 + (\theta^7 + \theta) X + \theta^8 + \theta^5 + \theta^2.
\end{split}
\end{equation*}

The right hand side can be simplified to 
\begin{equation*}
\begin{split}
&\phantom{=1} X^{20} + X^{17} + \theta X^{16} + \theta X^{13} + \theta^3 X^8 + \theta^3 X^5 + \theta^4 X^4 + \theta^4 X \\
&= (X^8 + X^5 + \theta X^4 + \theta X)(X^{12} + \theta^3)
\end{split}
\end{equation*}
Hence, it suffices to show that 
\[
(Y^4 + \eta)^2 + (Y^4 + \eta) = X^{12} + \theta^3.
\]
Raising $\eta^2 + \eta = \theta^3 + w$ by $4 \cdot 4^n$-th power, we have that 
\[
Y^8 + Y^4 = X^{12} + w.
\]
In particular, $w^4 = w$ as $w \in \bb F_4$. Thus, 
\[
(Y^4 + \eta)^2 + (Y^4 + \eta) 
= Y^8 + Y^2 + \eta^2 + \eta 
= X^{12} + w + \theta^3 + w = X^{12} + \theta^3,
\]
completing the proof. 



\medskip
\noindent {\bf Example (iv) \texorpdfstring{$A=\bb F_2[\theta,\eta] / (\eta^2 + \eta + \theta^5 + \theta^3 + 1)$.}{g = 2, h = 1, F2}}


This one is the easiest among the four examples. The expression (\ref{eq:CNPEg4}) has the same denominator as Lara Rodr\'iguez-Thakur's expression (\ref{eq:RTEg4}). Hence it suffices to show that their numerators are the same. By setting the numerators to be equal, it boils down to showing that 
\begin{equation*}
Y^4 + Y = X^{10} + X^6 + X^5 + X^3,
\end{equation*}
Or equivalently,
\begin{equation*}
\eta^{2^{n+1}} + \eta^{2^{n-1}} = \theta^{10 \cdot 2^{n-1}} + \theta^{6 \cdot 2^{n-1}} + \theta^{5 \cdot 2^{n-1}} + \theta^{3 \cdot 2^{n-1}}
\end{equation*}
for $n \geq 1$. This follows from the equality
\begin{equation*}
\eta^4 + \eta = \theta^{10} + \theta^6 + \theta^5 + \theta^3
\end{equation*}
by squaring $\eta^2 = \eta + \theta^5 + \theta^3 + 1$. The proof is complete. 
\end{proof}


\end{document}